\pdfoutput=1 

\documentclass[12pt,a4paper]{amsart}
\usepackage[T1]{fontenc}
\usepackage[top=35mm, bottom=40mm, left=30mm, right=30mm]{geometry}
\usepackage[colorlinks=true,citecolor=blue]{hyperref}
\usepackage{amssymb}
\usepackage{amsthm}
\usepackage{xcolor}
\usepackage{verbatim}

\usepackage[looser]{newtxtext}
\usepackage{newtxmath}

\newtheorem{thmx}{Theorem}

\newtheorem{thmxx}{Theorem}

\newtheorem{thm}{Theorem}[section]
\newtheorem{cor}[thm]{Corollary}
\newtheorem{lem}[thm]{Lemma}
\newtheorem{prop}[thm]{Proposition}

\theoremstyle{definition}

\newtheorem{rem}[thm]{Remark}

\numberwithin{equation}{section}
\DeclareMathOperator{\diam}{diam}
\newcommand{\bbn}{\mathbb{N}}
\newcommand{\eps}{\varepsilon}
\numberwithin{equation}{section}

\title{Stronger versions of sensitivity for minimal group actions}
\author{Jian Li and Yini Yang}
\address{Department of Mathematics, Shantou University,
	Shantou, Guangdong, 515063, P.R. China}
\email{lijian09@mail.ustc.edu.cn}
\email{ynyangchs@foxmail.com}
\subjclass{37B05, 37B25}
\keywords{Sensitivity, minimal group actions, maximal equicontinuous factors, regionally proximal relation}

\begin{document}

\begin{abstract}
	We study several stronger versions of sensitivity for minimal group actions, including $n$-sensitivity, thick $n$-sensitivity and blockily thick $n$-sensitivity, and characterize them by the regionally proximal relation.
\end{abstract}

	\maketitle

\section{Introduction}
By a $\mathbb{Z}$-action (topological) dynamical system, we mean a pair $(X,T)$,
where $X$ is a compact metric space with a metric $d$ and
$T\colon X\to X$ is a homeomorphism.
A dynamical system $(X,T)$ is called \emph{equicontinuous} if the family of maps $\{T^m\colon m\in\mathbb{Z}\}$ are uniformly equicontinuous, that is for every $\varepsilon>0$
there is a $\delta>0$ such that whenever $x,y\in X$ with $d(x,y)<\delta$, $d(T^mx,T^my)<\varepsilon$ for all $m\in \mathbb{Z}$.
Equicontinuous systems are stable in some sense.
The opposite of equicontinuity is sensitive dependence on initial conditions (sensitivity for short), which was introduced
by Ruelle \cite{Ruelle1977}.
A dynamical system $(X,T)$ is called \emph{sensitive}
if there exists a constant $\delta>0$ such that for every opene (open and non-empty) subset $U\subset X$
there exist $x,y\in U$ and
$m\in\bbn$ with $d(T^mx,T^my)>\delta$. The interesting dichotomy theorem proved by Auslander and Yorke \cite{Auslander1980} is as follows:
every minimal system is either equicontinuous or sensitive.

In \cite{X05} Xiong introduced a multi-variate version of sensitivity, called $n$-sensitivity.
Let $n\geq 2$. A dynamical system $(X,T)$ is called \emph{$n$-sensitive} if there exists a constant $\delta>0$ such that
for any opene set $U$, there exist $x_1, x_2, \cdots, x_n\in U$ and $m\in \bbn$ with
\[\min_{1\le i<j\le n}d(T^mx_i,T^mx_j)>\delta.\]
Shao, Ye and Zhang \cite{SYZ,YZ} studied $n$-sensitivity extensively, particularly for minimal systems. Huang, Lu and Ye \cite{Huang2011} finally obtained the structure of $n$-sensitivity for minimal systems.

Let $(X,T)$ be a dynamical system. For $n\geq 2$, $\delta>0$ and an opene subset $U$ of $X$, define
\[N(U,\delta;n):=\Bigl\{m\in\bbn\colon \exists
x_1,x_2,\cdots,x_n\in U \text{ such that }
\min_{1\le i<j\le n}d(T^m x_i,T^m x_j)>\delta\Bigr\}.\]
It is easy to see that $(X,T)$ is $n$-sensitive if and only if there exists a constant $\delta>0$ such that
$N(U,\delta;n)$ is infinite for each opene subset $U$ of $X$.
Following the ideas in \cite{F1981} and \cite{A97},
it is natural to study $N(U,\delta;n)$ via some well-performed subsets of $\mathbb{Z}$, see e.g.~\cite{Moothathu2007}, \cite{HKZ2014}, \cite{Zou17}, \cite{LZ2017} and so on.
We refer the reader to the survey \cite{LY16} for related results.

Recall that a subset $S$ of $\bbn$
is called \emph{thick} if for each $k\in\bbn$, there exists $m_k\in\bbn$ such that $\{m_k,m_k+1,\dotsc,m_k+k\}\subset S$.
Let $n\geq 2$.
A dynamical system $(X,T)$ is called \emph{thickly $n$-sensitive}
if there exists a constant $\delta>0$ such that
$N(U,\delta;n)$ is  thick for each opene subset $U$ of $X$.
In \cite{HKZ2014} Huang, Kolyada and Zhang  obtained an analog of Auslander-Yorke type of dichotomy for minimal system: a minimal system is either thick $2$-sensitive or an almost $1$-to-$1$ extension of its maximal equicontinuous factor.
In \cite{LZ2017} Liu and Zhou showed that a minimal system
is either thickly $n$-sensitive or an almost $m$-to-$1$ extension of its maximal equicontinuous factor for some $m\in\{1,\dotsc,n-1\}$.

A dynamical system $(X,T)$ is called \emph{blockily thickly $n$-sensitive}
if there is a constant $\delta>0$
such that for every $k\in \bbn$ and every opene subset $U$ of $X$,
we can find $x_1,x_2,\dotsc,x_n\in U$ and $m_k\in\bbn$
such that
\[
\{m_k,m_k+1,\dotsc,m_k+k\}\subset
\Bigr\{ m\in\bbn\colon
\min_{1\le i<j\le n}d(T^m x_i,T^m x_j)>\delta\Bigr\}.
\]
In \cite{Y18} Ye and Yu showed that a minimal system is either blockily thickly $2$-sensitive or a proximal extension of its maximal equicontinuous factor.
In \cite{Zou17} Zou showed that a minimal system is
blockily thickly $n$-sensitive if and only if for the factor map to its maximal equicontinuous factor, there exist $n$ pairwise distinct points on fibers such that any two of them form a distal pair.

It is natural to consider sensitivity for group actions or semigroup actions, see e.g.\ \cite{KM08}, \cite{P10} and \cite{LYZ18}.
In this paper, we study $n$-sensitivity, thick $n$-sensitivity and blockily thick $n$-sensitivity for a minimal group action $(X,G)$, where $G$ is a countable discrete group. 
A pair $(x_1,x_2)\in X\times X$ is called \emph{regionally proximal} if for each $\eps> 0$ 
there exist $x_1^{\prime},x_2^{\prime} \in X$ with $d(x_i,x_i^{\prime})<\eps$ for $i=1,2$ and $g\in G$ such that $d(gx_1', gx_2')<\eps$.
Let $Q(X, G)$ be the collection of all regionally proximal pairs of $(X,G)$ 
and $Q[x]=\{y\in X: (x,y)\in Q(X,G)\}$.
We characterize three versions of sensitivity by the regionally proximal relation 
as follows, which are the main results of the paper.

\begin{thmx}\label{thm:main-1}
Let $(X,G)$ be a minimal system. Assume that $n\geq 2$ and $X\times X$ has a dense set of minimal points.
Then $(X,G)$ is $n$-sensitive if and only if
there exists $x\in X$ such that 
$\#Q[x]\geq n$,
where $\#(\,\cdot\,)$ is the cardinality of a set.
\end{thmx}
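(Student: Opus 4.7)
The plan is to handle the two directions separately, using a diagonal compactness argument in one direction and the full force of the dense-minimal-points hypothesis in the other.

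\emph{``Only if'' direction.} I would fix $z\in X$ together with a shrinking sequence of opene neighborhoods $U_k\ni z$ with $\diam(U_k)\to 0$. By $n$-sensitivity with constant $\delta>0$, extract points $x_1^{(k)},\dotsc,x_n^{(k)}\in U_k$ and elements $g_k\in G$ whose $g_k$-images are pairwise $\delta$-separated. After passing to a subsequence, $g_kx_i^{(k)}\to y_i$ for each $i$, with $d(y_i,y_j)\ge\delta$, so the $y_i$ are pairwise distinct. For each pair $i\ne j$, the inverses $g_k^{-1}$ send $g_kx_i^{(k)}\to y_i$ and $g_kx_j^{(k)}\to y_j$ back to $x_i^{(k)},x_j^{(k)}\in U_k$, whose mutual distance tends to $0$; this is precisely the sequential form of $(y_i,y_j)\in Q(X,G)$. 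Hence $\{y_2,\dotsc,y_n\}\subset Q[y_1]$ and $\#Q[y_1]\ge n$.

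\emph{``If'' direction.} Suppose $Q[x]$ contains distinct points $x_1=x,x_2,\dotsc,x_n$. The density of minimal points in $X\times X$ classically forces $Q(X,G)$ to be a closed equivalence relation (generalizing the Furstenberg/Auslander theorem), so the $x_i$ are pairwise regionally proximal. Set $\delta=\tfrac{1}{4}\min_{i\ne j}d(x_i,x_j)$; the aim is, for every opene $U$, to produce $u_1,\dotsc,u_n\in U$ and $g\in G$ with pairwise $g$-image separation $>2\delta$. The idea is to \emph{reverse} the direction of regional proximality: I establish sequences $z_i^{(k)}\to x_i$ together with a single $g_k\in G$ such that $g_kz_1^{(k)},\dotsc,g_kz_n^{(k)}$ all converge to a common limit $c\in X$. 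Granting this, pick $h\in G$ with $hc\in U$ by minimality of $(X,G)$; then $u_i^{(k)}:=hg_kz_i^{(k)}\in U$ for large $k$, and $g:=(hg_k)^{-1}$ sends $u_i^{(k)}$ back to $z_i^{(k)}\to x_i$, delivering pairwise $g$-separation greater than $2\delta$.

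The main obstacle is establishing this joint regional proximality statement: the pairwise data does not automatically produce a common $g_k$ collapsing all $n$ sequences at once, so the density-of-minimal-points hypothesis must be used essentially. I expect the argument to lift the $n$-tuple $(x_1,\dotsc,x_n)$ to a nearby minimal point of $X^n$ (bootstrapping from the $X\times X$ hypothesis by induction on $n$, which is itself delicate) and then apply an Auslander--Ellis style idempotent argument in the Ellis semigroup of $(X^n,G)$ to collapse the coordinates simultaneously. This inductive promotion of density from $X\times X$ to $X^n$ and the ensuing idempotent argument form, in my view, the real technical heart of the proof.
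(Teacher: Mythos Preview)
Your overall architecture matches the paper's proof exactly: the ``only if'' direction is identical (shrinking neighborhoods of a fixed point, compactness to extract limits $y_i$, and the inverse $g_k^{-1}$ witnessing regional proximality), and for the ``if'' direction both you and the paper reduce to the same key fact --- that under the density-of-minimal-points hypothesis, pairwise regional proximality of $x_1,\dotsc,x_n$ upgrades to $(x_1,\dotsc,x_n)\in Q_n(X,G)$. Once that is in hand, your minimality-plus-inversion argument is precisely what the paper does.

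The difference is that the paper does not attempt to prove this upgrading step; it quotes it as a black box (Auslander, \textit{Topology Proc.}\ \textbf{28} (2004), Theorem~8). You correctly flag it as the technical heart, but your proposed route to it has a gap. If you approximate $(x_1,\dotsc,x_n)$ by a nearby \emph{minimal} point $(z_1,\dotsc,z_n)$ of $(X^n,G)$ with distinct coordinates, then every pair $(z_i,z_j)$ is distal, and the minimal orbit closure of $(z_1,\dotsc,z_n)$ cannot meet the diagonal $\Delta_n(X)$ (else it would be contained in it). Hence no element of the Ellis semigroup --- idempotent or otherwise --- can collapse such a tuple, and the ``lift to a minimal $n$-tuple, then collapse'' strategy fails as stated. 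Auslander's actual argument proceeds through the $\tau$-topology on the Ellis group and a group-theoretic condition implied by the Bronstein hypothesis, not via approximation by minimal points in $X^n$. So your plan is sound up to the point of invoking this lemma, but the sketch of its proof would need to be replaced; in the paper it is simply cited.
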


\begin{thmx}\label{thm:main-2}
Let $(X,G)$ be a minimal system.
Assume that $n\geq 2$ and $X\times X$ has a dense set of minimal points.
Then $(X,G)$ is thickly $n$-sensitive if and only if for any $x\in X$, $\#Q[x]\geq n$.
\end{thmx}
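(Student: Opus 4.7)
The plan is to route everything through the maximal equicontinuous factor $\pi\colon X\to X_{\mathrm{eq}}$. Because $X\times X$ has a dense set of minimal points, $Q(X,G)$ is a closed $G$-invariant equivalence relation with $X/Q=X_{\mathrm{eq}}$, and hence $Q[x]=\pi^{-1}(\pi(x))$ for every $x$. So the statement becomes: $(X,G)$ is thickly $n$-sensitive if and only if every $\pi$-fiber has at least $n$ points.

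For the ``only if'' direction I would argue by contrapositive. Suppose $\pi^{-1}(\pi(x_0))=\{z_1,\dotsc,z_m\}$ has $m\le n-1$ elements. Given a prescribed $\eta>0$, upper semicontinuity of the set-valued map $y\mapsto\pi^{-1}(y)$ yields a neighbourhood $V$ of $\pi(x_0)$ with $\pi^{-1}(V)\subseteq\bigcup_{i=1}^m B(z_i,\eta)$. Fix a small opene $U\ni x_0$ so that $\pi(U)$ lies in a tiny neighbourhood $V'$ of $\pi(x_0)$; equicontinuity of the action on $X_{\mathrm{eq}}$ then makes $\{g\in G:g\pi(U)\subseteq V\}$ contain $\{g:g\pi(x_0)\in V''\}$ for a suitable $V''\subseteq V$, and the latter is syndetic by minimality of $X_{\mathrm{eq}}$. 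Since thick sets meet every syndetic set, thick $n$-sensitivity with constant $\delta>2\eta$ would produce some such $g$ together with $x_1,\dotsc,x_n\in U$ satisfying $\min_{i\ne j}d(gx_i,gx_j)>\delta$, while $gx_1,\dotsc,gx_n$ all lie in $\bigcup_i B(z_i,\eta)$; pigeonhole on the $m<n$ balls then contradicts the lower bound on distances.

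For the ``if'' direction, assume every fiber has at least $n$ points. The cleanest route is via an appropriate Liu--Zhou style dichotomy for countable group actions: $(X,G)$ is either thickly $n$-sensitive or an almost $m$-to-$1$ extension of $X_{\mathrm{eq}}$ for some $m\in\{1,\dotsc,n-1\}$; the latter case forces a residual set of fibers of size $m<n$, contradicting the hypothesis. Since the paper quotes this dichotomy only for $\mathbb{Z}$-actions, I would prove it in the group-action setting directly: pick $x_0$ with $n$ distinct points in $Q[x_0]$, use that dense minimal points in $X\times X$ propagates to $X^n$ to descend to a minimal point of the $n$-fold fibered product over $X_{\mathrm{eq}}$ whose coordinate distances are at least a uniform $\delta_0>0$, and then show that for every opene $U$ the return times of this minimal tuple into $U^n$ form a thick subset of $G$, by lifting the Bohr-set (hence thick) return-time structure of the equicontinuous base.

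The main obstacle is the ``if'' direction, specifically the upgrade from the syndetic return times furnished for free by minimality to thick return times. This calls for exploiting the equicontinuity of $X_{\mathrm{eq}}$ to produce a Bohr set of good times, together with a uniform lower bound $\delta_0$ on how spread out the $n$-point fiber configurations can be; the pointwise hypothesis $\#Q[x]\ge n$ and lower semicontinuity of the fiber-size function on the minimal equicontinuous base are what ensure that such a uniform $\delta_0$ exists.
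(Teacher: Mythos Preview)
Your $(\Rightarrow)$ direction is essentially the paper's argument: upper semicontinuity of fibers plus pigeonhole against a syndetic return set in $X_{\mathrm{eq}}$ coming from the invariant metric. (Minor quibble: the sensitivity constant $\delta$ is fixed first and then one chooses $\eta<\delta/2$; your phrasing reverses the quantifiers.)

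The $(\Leftarrow)$ direction has a genuine gap. You propose to ``descend to a minimal point of the $n$-fold fibered product over $X_{\mathrm{eq}}$ whose coordinate distances are at least a uniform $\delta_0>0$'', but such a minimal point need not exist under the hypothesis. The condition $\#\pi^{-1}(y)\ge n$ for all $y$ does \emph{not} force an off-diagonal minimal point in $R_\pi^n$: when $\pi$ is proximal but not almost $1$-to-$1$, every fiber is infinite yet every minimal point of $R_\pi^n$ lies on the diagonal. Indeed your outline, if it went through, would establish blockily thick $n$-sensitivity, and the paper records exactly this class of examples as thickly $n$-sensitive for every $n\ge 2$ while failing to be blockily thickly $2$-sensitive. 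Separately, the parenthetical ``Bohr-set (hence thick)'' is wrong: return-time sets in an equicontinuous system are syndetic, not thick.

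The paper's route is different and exploits the feature of thick (as opposed to blockily thick) $n$-sensitivity that the $n$-tuple in $U$ is allowed to depend on the individual element of the finite set $F=\{g_1,\dotsc,g_m\}$. One fixes $y\in X_{\mathrm{eq}}$ and, for each $g_i$ \emph{separately}, picks a $2\delta$-separated $n$-tuple in $\pi^{-1}(g_i y)$ (this is where the uniform constant $\delta=\tfrac12\inf_y\phi_n(y)>0$ enters, via the upper semicontinuity of $\phi_n$ together with the fact that on a minimal base $\inf\phi_n=0$ would force a zero), then pulls back by $g_i^{-1}$ to get an $nm$-tuple in $\pi^{-1}(y)$. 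Auslander's theorem (this is where the dense-minimal-points hypothesis is used) places this $nm$-tuple in $Q_{nm}(X,G)$; the definition of $Q_{nm}$ then collapses it near a single point via some $s_k\in G$, and minimality transports that point into $U$. Unwinding produces, for each $g_i$, its own $n$-tuple in $U$ that $g_i(ts_k)^{-1}$ separates by more than $\delta$, whence $F(ts_k)^{-1}\subset N(U,\delta;n)$. No minimal $n$-tuple in $R_\pi^n$ is ever invoked, and the size of the regionally proximal tuple grows with $|F|$.
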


\begin{thmx}\label{thm:main-3}
Let $(X,G)$ be a minimal system. Assume that $n\geq 2$ and $X\times X$ has a dense set of minimal points.
Then $(X,G)$ is blockily thickly $n$-sensitive if and only if 	there exists a point $x\in X$ and pairwise distinct points $x_1,x_2,\dotsc,x_n\in Q[x]$ such that
$(x_1,x_2,\dotsc,x_n)$ is a minimal point for $(X^n,G)$.
\end{thmx}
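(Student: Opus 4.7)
The plan is to treat the two directions separately, along the lines of the proofs of Theorems A and B.

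For sufficiency, suppose we are given $x\in X$ and pairwise distinct $x_1,\dotsc,x_n\in Q[x]$ with $\mathbf{x}:=(x_1,\dotsc,x_n)$ minimal in $(X^n,G)$. Since any two distinct coordinates of a minimal point project to a minimal off-diagonal pair in $X^2$, which is automatically distal, $3\delta:=\inf_{g\in G}\min_{i\neq j}d(gx_i,gx_j)>0$, and I claim that $\delta$ is a blockily thick $n$-sensitivity constant. Given an opene $U\subset X$ and a finite ``block'' $F\subset G$, the continuity of each $g\in F$ and the fact that $g\mathbf{x}\in W:=\{(w_1,\dotsc,w_n):\min_{i<j}d(w_i,w_j)>\delta\}$ let me choose a neighborhood $V$ of $\mathbf{x}$ in $X^n$ with $gV\subset W$ for every $g\in F$. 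It then suffices to produce $y_1,\dotsc,y_n\in U$ and $g_0\in G$ with $g_0(y_1,\dotsc,y_n)\in V$, for then the entire translate $Fg_0$ keeps the configuration inside $W$ and the pairwise distances stay above $\delta$ throughout the block.

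The crucial input is a \emph{simultaneous} form of regional proximity. I begin with the reduction that $(x,x_1,\dotsc,x_n)$ may be assumed minimal in $(X^{n+1},G)$: a minimal subset of $\overline{G(x,x_1,\dotsc,x_n)}$ projecting onto the minimal set $\overline{G\mathbf{x}}$ contains a point $(x',x_1,\dotsc,x_n)$, and closedness and $G$-invariance of $Q$ guarantee $x_i\in Q[x']$, so we replace $x$ by $x'$. Using this minimality together with each $(x,x_i)\in Q$ and the hypothesis that $X\times X$ has a dense set of minimal points, I expect to establish: for every $\eps>0$ there exist $h\in G$ and a point $(\tilde x,\tilde x_1,\dotsc,\tilde x_n)$ close to $(x,x_1,\dotsc,x_n)$ with $\max_i d(h\tilde x,h\tilde x_i)<\eps$. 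Setting $y_i:=h\tilde x_i$ clusters the $y_i$ in a small ball around $h\tilde x$; by minimality of $(X,G)$ a further translation places this ball inside $U$, and the composed element then serves as $g_0^{-1}$, sending $(y_1,\dotsc,y_n)$ back near $\mathbf{x}$ and hence into $V$.

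For necessity, assume $(X,G)$ is blockily thickly $n$-sensitive with constant $\delta$. Fix any $x\in X$ and opene neighborhoods $U_k$ of $x$ with $\diam(U_k)\to 0$. For each $k$, pick $y_1^{(k)},\dotsc,y_n^{(k)}\in U_k$ and an element $g_k\in G$ lying in a witnessing block of length $\to\infty$. After a diagonal extraction, $y_i^{(k)}\to x$ for each $i$ and $g_k y_i^{(k)}\to z_i$ with $d(z_i,z_j)\geq\delta$; the block structure forces the forward orbit of $(z_1,\dotsc,z_n)$ under a cofinal increasing family in $G$ to remain in the closed $\delta$-separated set, so every minimal point $(w_1,\dotsc,w_n)$ of $\overline{G(z_1,\dotsc,z_n)}$ has pairwise distinct coordinates. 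Regional proximity $(z_i,z_j)\in Q$ is immediate because $(y_i^{(k)},y_j^{(k)})$ lies in an arbitrarily small neighborhood of the diagonal while $g_k$ sends it close to $(z_i,z_j)$. An analogous argument invoking the closedness of $Q$, $G$-invariance, and the density of minimal points in $X\times X$ then yields $w_i\in Q[x]$. The main obstacle I anticipate is on the sufficiency side: pairwise regional proximity of $(x,x_i)$ does not automatically produce a single $h$ that simultaneously $\eps$-collapses the whole $(n+1)$-tuple, and establishing this joint collapsing---presumably via a lemma already developed earlier in the paper under the dense-minimal-points hypothesis---is the technical core. The parallel subtlety on the necessity side is transferring regional proximity from the symmetric pair $(z_i,z_j)$ to the asymmetric pair $(x,w_i)$, which should likewise be handled using the density hypothesis.
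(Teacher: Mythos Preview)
Your proposal is correct and follows essentially the same route as the paper, which proves the equivalent fiber formulation (Theorem~C$'$) via Proposition~\ref{prop:block-minimal-point} for necessity and a direct argument for sufficiency. The ``joint collapsing'' lemma you anticipate is precisely Theorem~\ref{03} (Auslander): under the dense-minimal-points hypothesis, pairwise membership in $Q$ implies membership in $Q_n$.

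Two places where you are making life harder than necessary. On the sufficiency side, the extra point $x$ and the reduction to $(x,x_1,\dotsc,x_n)$ minimal are superfluous: since $x_i\in Q[x]$ and $Q$ is an equivalence relation here, $(x_i,x_j)\in Q$ for all $i,j$, so Theorem~\ref{03} gives $(x_1,\dotsc,x_n)\in Q_n(X,G)$ directly. That alone yields, for each $m$, points $y_i^m\in B(x_i,1/m)$ and $s_m\in G$ with $\max_{i,j} d(s_m y_i^m,s_m y_j^m)<1/m$; passing to a limit and translating by minimality gives the $g_0$ you want, exactly as you outline---no $(n{+}1)$st coordinate needed. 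On the necessity side, the ``parallel subtlety'' of transferring regional proximity to the pair $(x,w_i)$ for your \emph{originally fixed} $x$ is a phantom obstacle: the statement only asks for \emph{some} $x$, and once you have $(z_i,z_j)\in Q$ (which you argue correctly), closedness and $G$-invariance give $(w_i,w_j)\in Q$, so you may simply take $x=w_1$. The paper instead shows $\pi(u^i)=\pi(u^j)$ using an invariant metric on $X_{eq}$, which is the same conclusion phrased through $R_\pi=Q$.
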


The paper is organized as follows.
In Section $2$, we recall some definitions and
related results which will be used later. In Section $3$,
we study some properties about the fibers of factor maps between minimal systems, which may be of interest independently.
Theorems~\ref{thm:main-1}, \ref{thm:main-2} and \ref{thm:main-3}
are proved in Sections 4, 5 and 6 respectively.
In section 7, we list a few remarks on the main results.

\section{Preliminaries}

In this section we will recall some basic notions and results which we will need in the following sections.

As usual, the collections of positive integers, integers and real numbers are denoted by $\bbn$, $\mathbb{Z}$ and $\mathbb{R}$ respectively.

Throughout this paper, let $G$ be a countable discrete group with an identity $e$.
We say that a subset $A$ of $G$ is \emph{thick}  if for any finite subset $F$ of $G$ there exists $g\in G$ such that $Fg\subset A$;
and \emph{syndetic}  if there exists a finite subset $K$ of $G$ such that $G=KA$.

Let $X$ be a compact metric space with a metric $d$.
A \emph{$G$-action} on $X$ is a continuous map $\Pi:G\times X\to X$ satisfying
$\Pi(e,x)=x$, $\forall x\in X$
and $\Pi(s,\Pi(g,x))=\Pi(sg,x)$, $\forall x\in X$, $s,g\in G$.
We say that the triple $(X,G,\Pi)$ is a \emph{topological dynamical system}.
For convenience, we will use the pair $(X,G)$ instead of $(X,G,\Pi)$ to denote the topological dynamical system, and $gx:=\Pi(g,x)$ if the map $\Pi$ is unambiguous.
For any $n\in \bbn$, there is a natural $G$-action on the $n$-fold product space $X^n$
as $g(x_1,\dotsc,x_n)=(gx_1,\dotsc,gx_n)$ for every $(x_1,\dotsc,x_n)\in X^n$.

A non-empty closed
$G$-invariant subset $Y\subseteq X$ defines naturally a subsystem $(Y, G)$ of $(X, G)$.
A system $(X,G)$ is called \emph{minimal} if it contains no proper subsystems.
Each point belonging to some minimal subsystem of $(X,G)$ is called a \emph{minimal point}.
By the Zorn's Lemma, every topological dynamical system has a minimal subsystem.

The \emph{orbit} of a point $x\in X$ is the set
$Gx=\{gx:\ g\in G\}$, and the \emph{orbit closure} is $\overline{Gx}$. It is clear that the orbit closure $\overline{Gx}$ is $G$-invariant and $(\overline{Gx},G)$ is a subsystem of $(X,G)$.
Any point with dense orbit is called \emph{transitive}, and in this case the system is called \emph{point transitive}.
It is easy to see that $(X,G)$ is minimal if and only if every point in $X$ is transitive.

A pair $(x_1,x_2)\in X\times X$ is said to be \emph{proximal} if $\inf_{g\in G}d(gx_1, gx_2)=0$,
and \emph{distal} if  $\inf_{g\in G}d(gx_1, gx_2)>0$.
It is easy to see that if $(x_1,x_2)$ is a minimal point in $(X^2,G)$ then either $x_1=x_2$ or $(x_1,x_2)$ is distal.

Let $(X,G)$ and $(Y,G)$ be two dynamical systems. If there is a continuous surjection $\pi: X \to Y$ with $\pi\circ g = g\circ \pi$ for all $g\in G$,
then we say that $\pi$ is a \emph{factor map},
the system $(Y,G)$ is a \emph{factor} of $(X,G)$
or $(X, G)$ is an \emph{extension} of $(Y,G)$.	
If $\pi$ is a homeomorphism, then we say that $\pi$ is a \emph{conjugacy} and
dynamical systems $(X,G)$ and $(Y,G)$ are \emph{conjugate}. Conjugate dynamical systems can
be considered the same from the dynamical point of view.
Let $\pi\colon (X,G)\to (Y,G)$ be a factor map between two dynamical systems and let
\[
R_\pi=\{(x_1,x_2)\in X\times X \colon \pi(x_1)=\pi(x_2)\}.
\]
Then $R_\pi$ is a closed $G$-invariant equivalence relation on $X$ and $Y=X/R_\pi$.
In fact, there exists a one-to-one correspondence between
the collection of factors of $(X,G)$ and the collection of closed $G$-invariant equivalence relations on $X$.
We refer the reader to \cite{A88} for the theory of minimal systems and their extensions.

We say that $(X,G)$ is \emph{equicontinuous} if
for every $\eps>0$ there exists $\delta>0$ such that
for any $x,y\in X$ with $d(x,y)<\delta$, $d(gx,gy)<\eps$ for all $g\in G$.
Equicontinuous system has ``simple'' dynamical behaviors.
For example, if $(X,G)$ is equicontinuous then there exists a compatible metric $\rho$ on $X$ such that the action $G$ on $X$ is isometric, that is $\rho(gx,gy)=\rho(x,y)$ for every $g\in G$ and $x,y\in X$.
Every topological dynamical system $(X,G)$ has a maximal equicontinuous factor $(X_{eq},G)$, that is $(X_{eq},G)$ is equicontinuous
and every equicontinuous factor of $(X,G)$ is also
a factor of $(X_{eq}, G)$.
There is a closed $G$-invariant equivalence relation $S_{eq}$ on $(X,G)$, called the \emph{equicontinuous structure relation}, such that $X/S_{eq}=X_{eq}$.

The equicontinuous structure relation is closely related to the regionally proximal relation.
A pair $(x_1,x_2)\in X\times X$ is called \emph{regionally proximal} if for each $\eps> 0$ 
there exist $x_1^{\prime},x_2^{\prime} \in X$ with $d(x_i,x_i^{\prime})<\eps$ for $i=1,2$ and $g\in G$ such that $d(gx_1', gx_2')<\eps$.
Let $Q(X, G)$ be the collection of all regionally proximal pairs of $(X,G)$. We say that $Q(X, G)$ is the \emph{regionally proximal relation}.  Then $Q(X,G)$ is a reflexive symmetric $G$-invariant closed relation which contains $\Delta_2(X):=\{(x,x)\in X^2\colon x\in X\}$, and it is easy to see that $(X,G)$ is equicontinuous if and only if $Q(X,G)=\Delta_2(X)$.
Let $Q[x]=\{y\in X: (x,y)\in Q(X,G)\}$.
We know that $S_{eq}$ is the smallest closed $G$-invariant
equivalence relation containing $Q(X,G)$, see e.g. \cite[Theorem 9.3]{A88}.
It is interesting to know that when the regionally proximal relation on a minimal system is a closed equivalence relation (and so coincides
with the equicontinuous structure relation).
This happens for many cases, including the acting group $G$ is abelian \cite{V68},
$(X\times X,G)$ has a dense set of minimal points \cite{V75},
and $(X,G)$ admits an invariant probability Borel measure \cite{M78}.

For $n\geq 2$, we can also define the \emph{$n$-th regionally proximal relation} $Q_n(X,G)$ of $(X,G)$ by
$(x_1,x_2\cdots,x_n)\in Q_n(X,G)$
if and only if for any $\eps>0$
there exist $x_1^{\prime},x_2^{\prime}\cdots,x_n^{\prime}\in X$ with $d(x_i,x_i^{\prime})<\eps$ for $i=1,2,\cdots,n$ and $g\in G$ such that  $d(gx_i^{\prime},gx_j^{\prime})<\eps$ for $1\leq i,j\leq n$.
Thus $Q_2(X,G)=Q(X,G)$. Note that if $(x_1,x_2\cdots,x_n)\in Q_n(X,G)$ then $(x_i,x_j)\in Q(X,G)$ for $1\leq i,j\leq n$.
The following result shows that under some conditions the $n$-th regionally proximal relation is a joint of the regionally proximal relation. 

\begin{thm}[{\cite[Theorem 8]{A04}}] \label{03}
Let $(X,G)$ be a minimal system and $n\geq 2$.
Assume that $X\times X$ has a dense set of minimal points.
If  $x_1,x_2,\dotsc,x_n\in X$ satisfy $(x_1,x_i)\in Q(X,G)$ for $1\le i\le n$,
then $(x_1,x_2,\dotsc,x_n)\in Q_n(X,G)$.
\end{thm}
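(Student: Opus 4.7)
The approach rests on Veech's theorem, which the paper has already noted applies under the hypothesis of dense minimal points in $X \times X$: the relation $Q(X,G)$ is a closed $G$-invariant equivalence relation coinciding with the equicontinuous structure relation $S_{eq}$. Hence the assumption $(x_1, x_i) \in Q(X,G)$ for $1 \le i \le n$ implies, by transitivity, that $(x_i, x_j) \in Q(X,G)$ for all $i, j$, and all the $x_i$ project to a single common point $y \in X_{eq}$ under the factor map $\pi\colon X \to X_{eq}$. This reduces the problem to showing that any $n$-tuple inside a common $Q$-class is jointly regionally proximal.

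I would then establish $(x_1,\dotsc,x_n) \in Q_n(X,G)$ by induction on $n$. The base case $n = 2$ is definitional. For the inductive step, fix $\eps > 0$: the inductive hypothesis gives perturbations $y_1, \dotsc, y_{n-1}$ of $x_1, \dotsc, x_{n-1}$ within $\eps$, together with a single $g \in G$ realizing $\max_{1 \le i,j \le n-1} d(gy_i, gy_j) < \eps$. Applying the definition of $Q$ to the pair $(x_{n-1}, x_n)$, one obtains further perturbations and a group element $h$ almost-collapsing that pair. The task reduces to merging $g$ and $h$ into a single group element that almost-collapses all $n$ coordinates simultaneously.

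The principal obstacle is exactly this synchronization between two witness group elements. My plan to handle it is to exploit the density of minimal points in $X \times X$: any regionally proximal pair can be approximated by a minimal point whose set of near-collapse times is syndetic. One may therefore choose the witness $h$ from a syndetic set whose intersection with the set of near-collapses provided by $g$ (itself syndetic when chosen at a minimal approximant) remains nonempty; the equicontinuity of $X_{eq}$ then propagates the collapse uniformly across the common $\pi$-fiber containing the $gy_i$. A more geometric alternative, which I would explore in parallel, is to work inside a minimal subsystem $M$ of the orbit closure $\overline{G(x_1,\dotsc,x_n)} \subset X^n$: since $\pi^n(M) \subset \Delta_n(X_{eq})$ is itself minimal and equal to the full diagonal by minimality of $X_{eq}$, almost periodicity arguments combined with the density hypothesis should force $M$ to approach $\Delta_n(X)$ arbitrarily closely, delivering the desired joint collapse. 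Either route relies essentially on the density hypothesis, both for invoking Veech's theorem to obtain the equivalence relation structure and for providing the almost-periodic rigidity needed to consolidate pairwise witnesses into a joint one.
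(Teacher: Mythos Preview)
The paper does not supply its own proof of this statement: Theorem~\ref{03} is quoted from Auslander \cite[Theorem~8]{A04} and used as a black box. As the authors indicate in their final remarks, Auslander's argument is carried out via a group-theoretic condition on the Ellis group of the system (a condition in fact weaker than density of minimal points in $X\times X$), so the machinery behind the cited result is the algebra of the enveloping semigroup rather than direct metric manipulations of the kind you sketch.

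Turning to your proposal on its own terms, there is a genuine gap precisely where you flag it. The synchronization of the two witness elements $g$ and $h$ is the whole content of the theorem, and neither route you suggest closes it. For the first route, you appeal to intersecting two syndetic return-time sets, but in a general discrete group the intersection of two syndetic sets need not be syndetic or even nonempty; something further (one of the sets being thickly syndetic, an IP-structure, or an Ellis-semigroup idempotent) is needed, and you have not said where it would come from. For the second route, the assertion that a minimal subset $M\subset\overline{G(x_1,\dotsc,x_n)}$ must approach $\Delta_n(X)$ is false in general: take $n=2$ and let $(x_1,x_2)\in R_\pi$ be a distal minimal point in $X\times X$ (such pairs exist exactly in the situations treated in Theorem~\ref{thm:main-3}); then $M=\overline{G(x_1,x_2)}$ is bounded away from $\Delta_2(X)$ even though $(x_1,x_2)\in Q(X,G)$. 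Knowing that $\pi^n(M)$ is the diagonal of $X_{eq}$ does not by itself force $M$ near $\Delta_n(X)$. In short, your outline correctly isolates the difficulty but does not resolve it; the published proofs \cite{A04,A01} handle it through the Ellis-group structure rather than by elementary combination of pairwise metric witnesses.
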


It is clear that if the conclusion of Theorem \ref{03} holds then
$Q(X,G)$ is an equivalence relation, and therefore it coincides with
the equicontinuous structure relation.
Let $\pi\colon (X,G)\to (X_{eq},G)$ be the factor map to its maximal equicontinuous factor.
If $X\times X$ has a dense set of minimal points then $R_\pi=Q(X,G)$.
So we can restate our main results (Theorems~\ref{thm:main-1}, \ref{thm:main-2} and~\ref{thm:main-3}) as follows: 

\begin{thmxx}\label{thm:main-1-x}
	Let $(X,G)$ be a minimal system and $\pi\colon (X,G)\to (X_{eq},G)$ be the factor map to its maximal equicontinuous factor. Assume that $n\geq 2$ and $X\times X$ has a dense set of minimal points.
	Then $(X,G)$ is $n$-sensitive if and only if
	$\sup_{y\in X_{eq}}\#(\pi^{-1}(y))\geq n$.
\end{thmxx}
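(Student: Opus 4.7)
The plan is to deduce Theorem~A$'$ directly from Theorem~\ref{thm:main-1} by exploiting the identification $R_{\pi}=Q(X,G)$ that is already noted in the preliminary discussion just before the statement. The whole argument amounts to a translation of the numerical criterion, so the real content --- and any genuine obstacle --- lives inside Theorem~\ref{thm:main-1} itself, whose proof is deferred to Section~4; here I only need to verify that the two criteria literally coincide.

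First I would confirm, under the standing hypothesis that $X\times X$ has a dense set of minimal points, that $Q(X,G)$ is an equivalence relation. By Theorem~\ref{03}, whenever $(x_{1},x_{2})$ and $(x_{1},x_{3})$ both lie in $Q(X,G)$, the triple $(x_{1},x_{2},x_{3})$ lies in $Q_{3}(X,G)$, and unpacking the definition of $Q_{3}$ immediately yields $(x_{2},x_{3})\in Q(X,G)$. Together with reflexivity, symmetry, closedness, and $G$-invariance, which the paper has already recorded, this makes $Q(X,G)$ a closed $G$-invariant equivalence relation. Since $S_{eq}$ is defined as the smallest closed $G$-invariant equivalence relation containing $Q(X,G)$, I would then conclude $S_{eq}=Q(X,G)$, hence $R_{\pi}=Q(X,G)$.

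It then remains only to translate the conclusion. From $R_{\pi}=Q(X,G)$ one has $\pi^{-1}(\pi(x))=Q[x]$ for every $x\in X$, and therefore
\[
\sup_{y\in X_{eq}}\#(\pi^{-1}(y))=\sup_{x\in X}\#Q[x].
\]
Thus the numerical condition of Theorem~A$'$ is word-for-word the numerical condition of Theorem~\ref{thm:main-1}, while the left-hand sides ($n$-sensitivity of $(X,G)$) are identical. The biconditional of Theorem~A$'$ therefore follows immediately from that of Theorem~\ref{thm:main-1}.
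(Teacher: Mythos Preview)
Your proposal is correct and matches the paper's own approach: the paper does not give a separate proof of Theorem~A$'$, but rather observes in the discussion immediately preceding it that, under the hypothesis that $X\times X$ has a dense set of minimal points, Theorem~\ref{03} forces $Q(X,G)$ to be an equivalence relation and hence $R_{\pi}=Q(X,G)$, so that Theorem~A$'$ is literally a restatement of Theorem~\ref{thm:main-1}. Your verification that $\pi^{-1}(\pi(x))=Q[x]$ and the resulting equality of the two numerical suprema is exactly the translation the paper has in mind.
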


\begin{thmxx}\label{thm:main-2-x}
    Let $(X,G)$ be a minimal system and $\pi\colon (X,G)\to (X_{eq},G)$ be the factor map to its maximal equicontinuous factor.
	Assume that $n\geq 2$ and $X\times X$ has a dense set of minimal points.
	Then $(X,G)$ is thickly $n$-sensitive if and only if
	$\inf_{y\in X_{eq}}\#(\pi^{-1}(y))\geq n$.
\end{thmxx}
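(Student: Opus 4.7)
The plan is to prove the two directions separately, using throughout the identification $R_\pi=Q(X,G)$ afforded by the hypothesis that $X\times X$ has a dense set of minimal points.

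For the forward direction, fix $\delta>0$ witnessing thick $n$-sensitivity and let $y\in X_{eq}$ be arbitrary. Choose any $x\in\pi^{-1}(y)$, together with shrinking neighborhood bases $U_k\searrow\{x\}$ in $X$ and $V_k\searrow\{y\}$ in $X_{eq}$. By hypothesis each $N(U_k,\delta;n)$ is thick in $G$, while the return-time set $R_k:=\{g\in G:gy\in V_k\}$ is syndetic because $y$ is almost periodic in the minimal system $(X_{eq},G)$. Since every thick set in $G$ meets every syndetic set, one can pick $g_k\in N(U_k,\delta;n)\cap R_k$ together with witnesses $x_1^{(k)},\ldots,x_n^{(k)}\in U_k$. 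Passing to a subsequence, $g_kx_i^{(k)}\to z_i$ with $d(z_i,z_j)\ge\delta$ for $i\neq j$, so the $z_i$ are pairwise distinct. Equicontinuity of $(X_{eq},G)$, applied to $\pi(x_i^{(k)})\to y$, makes $\pi(g_kx_i^{(k)})=g_k\pi(x_i^{(k)})$ asymptotically close to $g_ky\in V_k\to\{y\}$; hence $\pi(z_i)=y$ for every $i$, so $\pi^{-1}(y)$ contains at least $n$ points.

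For the reverse direction, I would first promote the pointwise hypothesis $\inf_y\#\pi^{-1}(y)\ge n$ to the existence of a uniform constant $\delta_0>0$ such that every fiber contains an $n$-subset that is pairwise $\delta_0$-separated; this upgrade uses the fiber-structure results of Section~3 together with the equicontinuous action of $G$ on $X_{eq}$ to compensate for the fact that $y\mapsto\pi^{-1}(y)$ is only upper semicontinuous in general. Setting $\delta:=\delta_0/3$, I would consider the closed $G$-invariant set
\[
W=\{(u_1,\ldots,u_n)\in X^n:\pi(u_1)=\cdots=\pi(u_n),\ \min_{i\neq j}d(u_i,u_j)\ge\delta_0\},
\]
which is nonempty by uniform separation, and extract a minimal subsystem $M\subseteq W$ via Zorn's lemma; each coordinate projection of $M$ onto $X$ is a closed $G$-invariant nonempty subset and so equals $X$ by minimality of $(X,G)$. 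Given an opene $U\subseteq X$ and finite $F\subseteq G$, I would produce $g$ with $Fg\subseteq N(U,\delta;n)$ by applying Theorem~\ref{03} to control the joint regionally-proximal structure of a minimal $n$-tuple in $M$, then combining this with the thick-syndetic intersection device from the forward direction to arrange simultaneous realisation across all $f\in F$.

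The main obstacle is the reverse direction. First, extracting a uniform $\delta_0$ is not automatic from the pointwise hypothesis: the fiber map is only upper semicontinuous, so the Section~3 rigidity is required to rule out fibers whose $n$-subsets are arbitrarily close. Second, $N(U,\delta;n)$ carries no manifest $G$-invariance, so the simultaneous condition $fg\in N(U,\delta;n)$ for every $f\in F$ cannot come from translating a single witness; the delicate step is to locate, inside $M$, an $n$-tuple whose joint orbit visits $U^n$ along a thick set in $G$, while the equicontinuity of $X_{eq}$ ensures each $f$-translate remains in a fiber of uniform separation, preserving the required $\delta$-spread.
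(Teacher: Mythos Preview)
Your forward direction is correct and is essentially the direct form of the paper's Proposition~\ref{prop:r-1-not-thk}: both use that the return-time set of $y_0$ to a small neighborhood is syndetic, intersect it with the thick set $N(U_k,\delta;n)$, and invoke the $G$-invariant metric on $X_{eq}$ to push the limit points into the fiber over $y_0$. The paper phrases this as a contradiction argument via pigeonhole on a finite fiber, but the content is the same.

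The reverse direction has a genuine gap. First, your set $W$ is \emph{not} $G$-invariant: the fiber condition $\pi(u_1)=\cdots=\pi(u_n)$ is preserved by $G$, but the separation condition $\min_{i\neq j}d(u_i,u_j)\ge\delta_0$ is not, so you cannot extract a minimal subsystem of $W$ in the way you describe. More seriously, the strategy cannot be repaired along these lines. The existence of a minimal point $(u_1,\ldots,u_n)\in R_\pi^n$ with pairwise distinct coordinates is exactly the characterization of \emph{blockily} thick $n$-sensitivity (Theorem~\ref{thm:main-3-x}), and by Remark~\ref{rek:proximal-not-1-1} there are minimal systems (with $\pi$ proximal but not almost $1$-to-$1$) that are thickly $n$-sensitive for all $n$ yet not blockily thickly $2$-sensitive. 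In such a system no minimal off-diagonal $n$-tuple in $R_\pi^n$ exists, so your minimal subsystem $M$ would necessarily collapse to a set with repeated coordinates and the subsequent argument breaks down. The remark about orbits visiting $U^n$ along a thick set is also off: for a minimal point this set is only syndetic.

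What the paper actually does is different in a crucial way. After securing the uniform separation $\delta_0=\inf_y\phi_n(y)>0$ from Proposition~\ref{prop:phi-n-0} (your step~1 is right), it fixes a finite set $F=\{g_1,\ldots,g_m\}$ and a point $y\in X_{eq}$, chooses a $\delta_0$-separated $n$-tuple in each fiber $\pi^{-1}(g_iy)$, and pulls these back by $g_i^{-1}$ to obtain $nm$ points $x_i^j$ all lying in the single fiber $\pi^{-1}(y)$, with the property that $g_i$ sends $\{x_i^1,\ldots,x_i^n\}$ to a $\delta_0$-separated set. Theorem~\ref{03} then gives $(x_i^j)\in Q_{nm}(X,G)$, so all $nm$ points can be simultaneously approximated by points that $s_k$ collapses near a single point $x$; minimality places $tx\in U$, and then for each $g_i\in F$ the $n$ points $ts_ky_i^{j,k}\in U$ witness $g_i(ts_k)^{-1}\in N(U,\delta;n)$. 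The key is that different $g_i$ use different $n$-tuples of witnesses inside $U$; a single $n$-tuple cannot serve all of $F$, which is why the $Q_{nm}$ relation (rather than a minimal point in $R_\pi^n$) is the right tool.
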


\begin{thmxx}\label{thm:main-3-x}
	Let $(X,G)$ be a minimal system and $\pi\colon (X,G)\to (X_{eq},G)$ be the factor map to its maximal equicontinuous factor. Assume that $n\geq 2$ and $X\times X$ has a dense set of minimal points.
	Then $(X,G)$ is blockily thickly $n$-sensitive if and only if 	there exists a point $y\in X_{eq}$ and pairwise distinct points $x_1,x_2,\dotsc,x_n\in \pi^{-1}(y)$ such that
	$(x_1,x_2,\dotsc,x_n)$ is a minimal point for $(X^n,G)$.
\end{thmxx}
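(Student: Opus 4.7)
The plan is to leverage the identification $Q(X,G)=R_\pi$ (valid under the standing hypothesis, as noted in the paragraph preceding the restated theorems) together with Theorem~\ref{03}, which upgrades any $n$-tuple of points in a common $\pi$-fiber to a member of the $n$-th regionally proximal relation $Q_n(X,G)$. With these two tools in hand, both directions become compactness arguments on $X^n$.

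For the forward direction, suppose $(X,G)$ is blockily thickly $n$-sensitive with constant $\delta>0$. Using countability, enumerate $G=\{g_k\}_{k\in\bbn}$, let $F_k=\{g_1,\dotsc,g_k\}$, and fix a nested sequence of opene neighborhoods $U_k$ of some fixed $x^*\in X$ with $\diam(U_k)\to 0$. For each $k$ the hypothesis produces $x_1^k,\dotsc,x_n^k\in U_k$ and $h_k\in G$ with $\min_{i<j} d(f h_k x_i^k, f h_k x_j^k)>\delta$ for every $f\in F_k$. Setting $y_i^k:=h_k x_i^k$ and passing to a convergent subsequence $y_i^k\to y_i$, one verifies two points. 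First, for every $g\in G$ the inclusion $g\in F_k$ holds eventually, so $d(gy_i,gy_j)\geq\delta$ for all $i\neq j$; in particular the $y_i$ are pairwise distinct. Second, the identity $d(h_k^{-1}y_i^k, h_k^{-1}y_j^k)=d(x_i^k,x_j^k)\leq \diam(U_k)\to 0$ combined with $y_i^k\to y_i$ matches the definition of the regionally proximal relation, so $(y_i,y_j)\in Q(X,G)=R_\pi$ for all $i,j$. Finally, any minimal point $(z_1,\dotsc,z_n)$ of a minimal subsystem of $\overline{G(y_1,\dotsc,y_n)}\subset X^n$ (supplied by Zorn's Lemma) inherits both the $\delta$-separation (which passes to the closure of the orbit) and the shared $\pi$-fiber (since $R_\pi$ is closed and $G$-invariant), so it is the required minimal $n$-tuple.

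For the converse, let $(x_1,\dotsc,x_n)$ be a minimal point of $X^n$ with pairwise distinct coordinates lying in the same fiber $\pi^{-1}(y)$. Each pair $(x_i,x_j)$ is then minimal and off-diagonal in $X^2$, hence distal by the remark in Section~2, so there is $\delta_0>0$ with $d(gx_i,gx_j)\geq 2\delta_0$ for all $g\in G$ and all $i\neq j$; I claim $\delta_0$ witnesses blockily thickly $n$-sensitivity. Fix an opene $U\subset X$ and a finite $F\subset G$. Choose $\eta>0$ by uniform continuity so that $d(x,x')<\eta$ implies $d(fx,fx')<\delta_0/2$ for every $f\in F$. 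Since $(x_i,x_1)\in Q(X,G)$ for each $i$, Theorem~\ref{03} gives $(x_1,\dotsc,x_n)\in Q_n(X,G)$. Applying the definition of $Q_n$ along a sequence $\eps_m\downarrow 0$ with $\eps_1<\eta$ yields $x_1^{(m)},\dotsc,x_n^{(m)}$ within $\eps_m$ of the $x_i$ and $g_m\in G$ with all $g_m x_i^{(m)}$ inside a common $\eps_m$-ball; after a subsequence, $g_m x_i^{(m)}\to v$ for every $i$. Minimality of $(X,G)$ provides $h\in G$ with $hv\in U$, and continuity of $h$ then puts $h g_m x_i^{(m)}\in U$ for all $i$ once $m$ is sufficiently large. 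Setting $g_F:=(hg_m)^{-1}$ and $x_i^F:=hg_m x_i^{(m)}$, one computes $fg_F x_i^F=fx_i^{(m)}$, which lies within $\delta_0/2$ of $fx_i$, so
\[
\min_{1\le i<j\le n} d(fg_F x_i^F, fg_F x_j^F)\geq 2\delta_0 - \delta_0 = \delta_0
\]
for every $f\in F$, completing the verification.

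The delicate point is the converse: the distal spread of $(x_1,\dotsc,x_n)$ prevents placing this configuration itself inside a small $U$, so the argument must replace $(x_1,\dotsc,x_n)$ by a nearby tuple whose $g_m$-image collapses to a single accumulation point $v$ via $Q_n$, and then exploit minimality of the base system to translate $v$ into $U$ with an auxiliary element $h$. Coordinating the two ``small parameters''---the $\eps_m$ governing the $Q_n$-approximation and the continuity radius of $h$ around $v$---is the only bookkeeping step that demands care.
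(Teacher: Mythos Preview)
Your proof is correct and follows essentially the same route as the paper's. The paper's forward direction is packaged as Proposition~\ref{prop:block-minimal-point} and shows $\pi(u^i)=\pi(u^j)$ via the $G$-invariant metric on $X_{eq}$, whereas you verify $(y_i,y_j)\in Q(X,G)=R_\pi$ directly from the regionally proximal definition using the collapsing action of $h_k^{-1}$; both then pass to a minimal point in the orbit closure (the paper via Lemma~\ref{lem:factor-minimal-distal}, you via Zorn). The converse directions are virtually identical: both invoke Theorem~\ref{03} to place the distal tuple in $Q_n(X,G)$, extract a collapsing sequence, translate its limit into $U$ by minimality, and read off the $F$-separation from the distal bound. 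One cosmetic point: your final estimate yields $\geq\delta_0$ rather than the strict $>\delta$ required by the definition; this is fixed by taking $\delta_0/2$ as the sensitivity constant or sharpening the $\eta$-choice to give error $<\delta_0/3$.
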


Theorems~\ref{thm:main-1-x}, \ref{thm:main-2-x} and~\ref{thm:main-3-x} show that we can also characterize three versions of sensitivity by the fibers of factor map to the maximal equicontinuous factor. 
One of the advantages of this form is that we should use some properties of factor maps between minimal systems in the proof of main results, which are dealt with in the next section.

\section{Properties of factor maps between minimal systems}
In this section, we study some properties of factor maps between minimal systems, which  may be of interest independently.

Let $X$ and $Y$ be two compact metric space.
We say that a map $\pi\colon X\to Y$ is \emph{semi-open}
if $\pi(U)$ has non-empty interior in $Y$ for every opene subset $U$ of $X$. It is easy to see that $\pi$ is semi-open if and only if $\pi^{-1}(A)$ is a dense $G_\delta$ subset of $X$ for every dense $G_\delta$ subset $A$ of $Y$.

\begin{lem}[{see e.g.\ \cite[Theorem1.15]{A88}}] \label{lem:factor-semi-open}
Let $\pi\colon(X,G)\to (Y,G)$ be a factor map between minimal system.
Then $\pi$ is semi-open.
\end{lem}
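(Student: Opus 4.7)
The plan is to reduce the claim to an application of the Baire category theorem, using minimality of the source system to produce a finite cover by translates of a given open set.

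First I would fix an opene subset $U\subseteq X$ and choose a smaller opene subset $V\subseteq X$ with $\overline{V}\subseteq U$ (for instance a small open ball). The reason for passing to $V$ is that I will eventually want to work with a \emph{closed} set inside $U$ so that its image under $\pi$ is closed in $Y$. Next, from minimality of $(X,G)$ the orbit of every point is dense, so $\{g V : g\in G\}$ is an open cover of the compact space $X$. Extracting a finite subcover yields elements $g_1,\dotsc,g_k\in G$ with $X=\bigcup_{i=1}^{k}g_i V$, and therefore $X=\bigcup_{i=1}^{k}g_i\overline{V}$.

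Applying the surjection $\pi$ and using $G$-equivariance gives
\[
Y=\pi(X)=\bigcup_{i=1}^{k} g_i\,\pi(\overline{V}).
\]
Because $\overline{V}$ is compact and $\pi$ is continuous, $\pi(\overline{V})$ is closed in $Y$, and each translate $g_i\pi(\overline{V})$ is also closed (as $g_i$ acts as a homeomorphism on $Y$). Since $Y$ is a compact metric space, hence a Baire space, a finite closed cover cannot consist entirely of nowhere dense sets, so some $g_{i_0}\pi(\overline{V})$ has non-empty interior. Translating back by $g_{i_0}^{-1}$, the set $\pi(\overline{V})$ itself has non-empty interior in $Y$.

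Finally, since $\overline{V}\subseteq U$ we have $\pi(\overline{V})\subseteq \pi(U)$, and therefore $\pi(U)$ contains an opene subset of $Y$, which is exactly semi-openness. The main (and only real) obstacle is the small technical point of needing to work with $\overline{V}$ rather than $V$: the Baire category argument produces interior for a closed cover, so we must arrange from the start that our chosen closed set sits inside the given opene $U$. Everything else is a routine combination of minimality, compactness, and Baire category.
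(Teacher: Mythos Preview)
Your argument is correct and is essentially the standard proof of this classical fact. Note, however, that the paper does not supply its own proof of this lemma: it is stated with a citation to Auslander's book and no argument appears, so there is nothing in the paper to compare your approach against. One minor remark: invoking the Baire category theorem is slightly more than necessary here, since any finite cover of a nonempty space by closed sets must contain a member with nonempty interior (the intersection of finitely many open dense sets is dense in \emph{any} topological space, no completeness needed); but this is purely cosmetic and does not affect the validity of your proof.
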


We say that a function $\phi\colon X\to \mathbb{R}$ is \emph{upper semi-continuous} if $\limsup_{x\to x_0} \phi(x)\leq \phi(x_0)$ for every $x_0\in X$.
It is easy to see that
$\phi$ is upper semi-continuous if and only if  $\phi^{-1}((-\infty,c))$ is open in $X$ for every $c\in \mathbb{R}$.

\begin{lem}[{see e.g.\ \cite[Lemma 1.28]{F1981}}] \label{lem:usc-continuous-points}
If $X$ is a compact metric space and $\phi\colon X\to \mathbb{R}$ is upper semi-continuous,
then the collection of all continuous points of $\phi$	
is a dense $G_\delta$ subset of $X$.
\end{lem}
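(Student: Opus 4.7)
The plan is to use the standard oscillation set-up for the $G_\delta$ part, and to exploit upper semi-continuity to force density. For each $n\in\bbn$, set
\[
U_n:=\bigl\{x\in X\colon \exists \text{ open } V\ni x \text{ with } \sup_V\phi-\inf_V\phi<1/n\bigr\}.
\]
A direct argument (shrinking the witnessing neighborhood around any of its points) shows each $U_n$ is open, and the continuity set of $\phi$ is exactly $\bigcap_{n}U_n$, hence $G_\delta$. By the Baire category theorem on the compact (hence complete) metric space $X$, it therefore suffices to prove that each $U_n$ is dense.

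For density of $U_n$ I argue by contradiction: suppose the closed set $F_n:=X\setminus U_n$ has nonempty interior $W_0$, so $\phi$ has local oscillation at least $1/n$ at every point of $W_0$. Set $M_0:=\sup_{W_0}\phi$, which is finite because a u.s.c.\ function on the compact space $X$ is bounded above. I would build by induction a nested sequence of opene sets $W_0\supset \overline{W_1}\supset W_1\supset \overline{W_2}\supset\cdots$, all contained in $F_n$, satisfying $\sup_{W_k}\phi\le M_0-k/(2n)$. Given $W_k$ with $M_k:=\sup_{W_k}\phi$, pick $x_{k+1}\in W_k$ with $\phi(x_{k+1})>M_k-1/(8n)$; upper semi-continuity furnishes an opene set $V\subset W_k$ through $x_{k+1}$ on which $\phi<\phi(x_{k+1})+1/(8n)$, and in particular $\sup_V\phi\le M_k+1/(8n)$. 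Since $V\subset F_n$, the oscillation hypothesis forces $\inf_V\phi\le \sup_V\phi-1/n\le M_k-7/(8n)$, so there is $y_{k+1}\in V$ with $\phi(y_{k+1})<M_k-1/(2n)$. Using upper semi-continuity once more, the set $\{\phi<M_k-1/(2n)\}$ is open; take $W_{k+1}$ to be a sufficiently small opene neighborhood of $y_{k+1}$ with $\overline{W_{k+1}}\subset V\cap\{\phi<M_k-1/(2n)\}$, so that $\sup_{W_{k+1}}\phi\le M_k-1/(2n)\le M_0-(k+1)/(2n)$.

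Compactness of $X$ yields a point $y\in\bigcap_k\overline{W_k}\neq\emptyset$, and the nesting $\overline{W_{k+1}}\subset W_k$ forces $y\in W_k$ for every $k$. Hence $\phi(y)\le\sup_{W_k}\phi\le M_0-k/(2n)\to -\infty$, contradicting $\phi(y)\in\mathbb{R}$. Therefore $F_n$ has empty interior, $U_n$ is dense open, and the lemma follows by Baire. The delicate point is the inductive step, where upper semi-continuity is used twice---once to cap $\phi$ on a neighborhood of a near-maximizer, and once (via openness of $\{\phi<c\}$) to lock in the strict drop in $\sup$ that passes the induction from $W_k$ to $W_{k+1}$.
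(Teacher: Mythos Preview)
Your argument is correct. The oscillation sets $U_n$ are open and their intersection is the continuity set, so the $G_\delta$ part is immediate; and your nested construction is sound: at each stage you use upper semi-continuity once to cap $\phi$ near a near-maximizer and once, via openness of $\{\phi<c\}$, to pass to a smaller opene set on which the supremum has dropped by at least $1/(2n)$. Compactness then traps a point $y$ in every $W_k$, forcing $\phi(y)\le M_0-k/(2n)$ for all $k$, which is impossible for a real-valued function. The only cosmetic remark is that you do not actually need $V\subset F_n$ in the inductive step; it suffices that the single point $x_{k+1}$ lies in $F_n$ (which it does, since $x_{k+1}\in W_k\subset W_0\subset F_n$), because the oscillation condition is pointwise: every open neighborhood of $x_{k+1}$, in particular $V$, has oscillation at least $1/n$.

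As for comparison with the paper: the paper does not prove this lemma at all. It is quoted as a standard fact with a reference to Furstenberg's book, so there is no in-paper argument to compare against. Your proof is a perfectly good self-contained substitute; an alternative route one sometimes sees is to observe that for u.s.c.\ $\phi$ the discontinuity set equals $\{x:\phi(x)>\underline{\phi}(x)\}$ where $\underline{\phi}(x)=\liminf_{y\to x}\phi(y)$, and then to run essentially the same ``supremum drops by a fixed amount'' argument directly on an arbitrary opene set rather than by contradiction---but the content is the same as what you wrote.
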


The following lemma plays an important role on the study of the cardinality of fibers.
\begin{lem}\label{lem:phi-n-usc}
Let $X$ and $Y$ be two compact metric spaces, and $\pi\colon X\to Y$ be a continuous surjection.
For every $n\geq 2$, define
\begin{align*}
\phi_n\colon Y &\to\mathbb{R}\\
y&\mapsto \sup\Bigl\{\min_{1\le i<j\le n} d(x_i,x_j)\colon x_1,x_2,\dotsc,x_n\in \pi^{-1}(y)\Bigr\}.
\end{align*}
Then
\begin{enumerate}
	\item for any $y\in Y$, $\phi_n(y)>0$ if and only if $\#(\pi^{-1}(y))\geq n$;
	\item $\phi_n$ is upper semicontinuous;
	\item $\{y\in Y\colon \phi_n(y)=0\}$ is a $G_\delta$ subset of $Y$.
\end{enumerate}
\end{lem}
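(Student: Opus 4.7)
The plan is to treat the three parts in order, exploiting compactness of the fibers $\pi^{-1}(y)$ throughout. As a preliminary remark, since $X$ is compact and $\pi$ is continuous, $\pi^{-1}(y)$ is a compact subset of $X$, so the map $(x_1,\dotsc,x_n)\mapsto \min_{i<j} d(x_i,x_j)$ is continuous on the compact set $\pi^{-1}(y)^n\subset X^n$. Thus the supremum defining $\phi_n(y)$ is actually attained.

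For part (1), if $\#(\pi^{-1}(y))\ge n$, I would pick $n$ distinct points in the fiber; their pairwise minimum distance is strictly positive, hence $\phi_n(y)>0$. Conversely, if $\phi_n(y)>0$, by the preliminary remark there exist $x_1,\dotsc,x_n\in\pi^{-1}(y)$ with $\min_{i<j} d(x_i,x_j)=\phi_n(y)>0$, so these points are pairwise distinct and $\#(\pi^{-1}(y))\ge n$.

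For part (2), I would verify upper semicontinuity via the sequential criterion. Fix $y_0\in Y$ and a sequence $y_k\to y_0$; set $c:=\limsup_k \phi_n(y_k)$. Passing to a subsequence, I may assume $\phi_n(y_k)\to c$, and by the preliminary remark choose attainers $x_1^{(k)},\dotsc,x_n^{(k)}\in\pi^{-1}(y_k)$ with $\min_{i<j} d(x_i^{(k)},x_j^{(k)})=\phi_n(y_k)$. By compactness of $X$, a further diagonal subsequence makes $x_i^{(k)}\to x_i\in X$ for each $i$. Continuity of $\pi$ gives $\pi(x_i)=y_0$, so $x_i\in\pi^{-1}(y_0)$, and continuity of the minimum-pairwise-distance function yields $\min_{i<j} d(x_i,x_j)=c$. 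Hence $\phi_n(y_0)\ge c=\limsup_k \phi_n(y_k)$, proving upper semicontinuity.

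Part (3) is then immediate: upper semicontinuity (via the open-sublevel-set characterization recalled just before the lemma statement) makes each set $\{y\in Y\colon \phi_n(y)<1/k\}$ open, and
\[
\{y\in Y\colon \phi_n(y)=0\}=\bigcap_{k=1}^{\infty}\bigl\{y\in Y\colon \phi_n(y)<1/k\bigr\},
\]
exhibiting it as a $G_\delta$. There is no real obstacle here; the only point that requires a moment of care is the attainment of the supremum in the definition of $\phi_n$, which is what lets the limsup argument in part (2) close cleanly without an auxiliary $\varepsilon$.
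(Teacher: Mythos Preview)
Your proof is correct and follows essentially the same route as the paper: part (1) is handled directly, part (2) via the sequential $\limsup$ criterion with a compactness/subsequence argument to push approximate attainers in the fiber to limit points in $\pi^{-1}(y_0)$, and part (3) by intersecting the open sublevel sets $\{\phi_n<1/k\}$. The only cosmetic difference is that you explicitly record attainment of the supremum up front, whereas the paper leaves this implicit and simply chooses near-maximizers whose minimum pairwise distance converges to $c$; both versions close the argument in the same way.
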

\begin{proof}
(1) is clear.
(2) Fix any $y_0\in Y$. To show that $\phi_n$ is upper semicontinuous,
it suffices to show that
\[\limsup_{y\to y_0}\phi_n(y)\leq \phi_n(y_0).\]
Let $c=\limsup_{y\to y_0}\phi_n(y)$.
Then there exists a sequence $\{y_m\}$ in $Y$ such that
$y_0=\lim_{m\to\infty} y_m$ and $c=\lim_{m\to\infty} \phi_n(y_m)$.
By the definition of $\phi_n$,
there exist sequences $\{a^{(i)}_m\}$ in $X$ with $\pi(a^{(i)}_m)=y_m$ for $i=1,2,\dotsc,n$
such that
\[
\lim_{m\to\infty}\min_{1\leq i<j\leq n}d(a^{(i)}_m,a^{(j)}_m)=c.
\]
As $X$ is compact, without loss of generalization assume that
$a^{(i)}_m \to a^{(i)}$ as $m\to\infty$ for  $i=1,2,\dotsc,n$.
Then $\pi(a^{(i)})=y_0$ for $i=1,2,\dotsc,n$, and
\[
\phi_n(y_0)\geq \min_{1\leq i<j\leq n} d(a^{(i)},a^{(j)})\geq c.
\]
(3) For every $k\in\bbn$, let $Y_k=\phi_n^{-1}([0,\frac{1}{k}))$.
As $\phi_n$ is upper semicontinuous, $Y_k$ is open in $Y$.
Then $\{y\in Y\colon \phi_n(y)=0\}=\bigcap_{k=1}^\infty Y_k$ is a $G_\delta$ subset of $Y$.
\end{proof}

In general, a upper semi-continuous function on a compact space can attain the maximum value, but not the minimum value.
The following result reveals the upper semi-continuous
function as in Lemma~\ref{lem:phi-n-usc} can also attain the minimum value if it is zero, which was inspired by \cite[Lemma 2.4]{DG16} and \cite[Proposition 4.4]{HKZ2014}.

\begin{prop}\label{prop:phi-n-0}
Let $\pi:(X,G)\to (Y, G)$ be a factor map between two minimal systems and $n\geq 2$.
Let $\phi_n$ as in Lemma~\ref{lem:phi-n-usc}.
If $\inf_{y\in Y}\phi_n(y)=0$, then there exists $y_0\in Y$ such that $\phi_n(y_0)=0$.
\end{prop}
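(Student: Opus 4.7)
The plan is to argue by contradiction: assume $\phi_n(y)>0$ for every $y\in Y$, and derive that $\inf_{y\in Y}\phi_n(y)>0$. The strategy combines upper semi-continuity of $\phi_n$ with the minimality of $(Y,G)$ and uniform continuity of group elements acting on the compact space $X$.

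First I would locate an open set on which $\phi_n$ is uniformly bounded below. Since $\phi_n$ is upper semi-continuous by Lemma~\ref{lem:phi-n-usc}(2), Lemma~\ref{lem:usc-continuous-points} gives a dense $G_\delta$ set of continuity points of $\phi_n$. Pick any such continuity point $y_0\in Y$. By the standing assumption, $\phi_n(y_0)>0$, so by continuity at $y_0$ there exist a constant $c>0$ and an opene neighborhood $V$ of $y_0$ with $\phi_n(y)\ge c$ for every $y\in V$.

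Next I would propagate this lower bound to all of $Y$ using minimality. Since $(Y,G)$ is minimal, $\bigcup_{g\in G}g^{-1}V=Y$, and by compactness of $Y$ one can select finitely many elements $g_1,\dotsc,g_m\in G$ with $\bigcup_{i=1}^{m}g_i^{-1}V=Y$. Each homeomorphism $g_i\colon X\to X$ is uniformly continuous on the compact space $X$, so there is $\delta_i>0$ such that $d(u,u')<\delta_i$ implies $d(g_iu,g_iu')<c$; equivalently, $d(g_iu,g_iu')\ge c$ forces $d(u,u')\ge\delta_i$. Let $\delta=\min_{1\le i\le m}\delta_i>0$.

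Finally I would conclude the uniform estimate. Fix an arbitrary $y\in Y$ and choose $i$ with $g_iy\in V$, so that $\phi_n(g_iy)\ge c$. The set $\pi^{-1}(g_iy)^n$ is compact and the function $(x_1,\dotsc,x_n)\mapsto \min_{1\le j<k\le n}d(x_j,x_k)$ is continuous, so the supremum defining $\phi_n(g_iy)$ is attained; using $G$-equivariance $\pi^{-1}(g_iy)=g_i\pi^{-1}(y)$, there exist $u_1,\dotsc,u_n\in \pi^{-1}(y)$ with $\min_{j<k}d(g_iu_j,g_iu_k)\ge c$. The choice of $\delta$ then yields $\min_{j<k}d(u_j,u_k)\ge\delta$, whence $\phi_n(y)\ge\delta$. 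As $y\in Y$ was arbitrary, $\inf_{y\in Y}\phi_n(y)\ge\delta>0$, contradicting the hypothesis. I expect the only subtle step to be verifying that the pointwise bound on $\phi_n$ in $V$ transfers, via uniform continuity of $g_i$, into a pointwise bound on $\phi_n$ everywhere, since the metric $d$ on $X$ need not be $G$-invariant; the finite cover coming from compactness together with uniform continuity of each $g_i$ is exactly what makes this transfer quantitative.
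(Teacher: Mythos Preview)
Your proof is correct and follows essentially the same approach as the paper's: both use a continuity point of $\phi_n$, a finite cover of $Y$ by $G$-translates coming from minimality, and (uniform) continuity of the finitely many group elements involved to transfer a bound on $\phi_n$ between a point and its translate. The only cosmetic difference is packaging: the paper argues directly by pushing a sequence $\{y_i\}$ with $\phi_n(y_i)\to 0$ into the neighborhood of the continuity point via a single $g_0$, whereas you argue the contrapositive by pushing the positive lower bound at the continuity point out to all of $Y$.
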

\begin{proof}
There exists a sequence $\{y_i\}$ in $Y$ such that
$\lim_{i\to\infty} \phi_n(y_i)=0$.
By Lemma~\ref{lem:usc-continuous-points}, pick a continuous point $y_0\in Y$ of $\phi_n$.
For any $\eps>0$, there exists a open neighborhood $V_\eps $ of $y_0$ such that for any $y\in V_\eps$, $|\phi_n(y)-\phi_n(y_0)|<\eps$.
As $(Y,G)$ is minimal, there exists a finite subset $F$ of $G$
such that $\bigcup_{g\in F} g^{-1} V_\eps =Y$.
Choose a subsequence if necessary, we can assume that
there exists $g_0\in F$ such that $y_i\in g_0^{-1}V_\eps$ for all $i\in\bbn$.
As $\pi^{-1}(g_0y_i)=g_0\pi^{-1}(y_i)$, $g_0$ is continuous and
$\lim_{i\to\infty} \phi_n(y_i)=0$, one has
\begin{align*}
\lim_{i\to\infty} \phi_n(g_0y_i)
&=\lim_{i\to\infty} \sup\Bigl\{ \min_{1\leq i<j\leq n} d(x_i,x_j)\colon x_1,\allowbreak x_2,\dotsc, x_n\in \pi^{-1}(g_0y_i)\Bigr\} \\
&=
\lim_{i\to\infty} \sup\Bigl\{ \min_{1\leq i<j\leq n} d(g_0z_i ,g_0z_j)\colon z_1,\allowbreak z_2,\dotsc, z_n\in \pi^{-1}(y_i)\Bigr\}\\
&=0.
\end{align*}
Then $|\phi_n(y_0)|\le \eps$.
This implies that $\phi_n(y_0)=0$  as $\eps$ is arbitrary.
\end{proof}

We say that a continuous surjection $\pi\colon X\to Y$  is \emph{almost $n$-to-$1$} if there exists a dense $G_\delta$ subset $X_0$ of $X$ such that $\#(\pi^{-1}(\pi(x)))=n$ for every $x\in X_0$.
It is easy to see that if $\pi$ is semi-open then $\pi$ is almost $n$-to-$1$ if and only if there exists a dense $G_\delta$ subset $Y_0$ of $Y$ such that $\#(\pi^{-1}(y))=n$ for every $y\in Y_0$.

\begin{prop}\label{lem:min-card}
Let $\pi:(X,G)\to (Y, G)$ be a factor map between two minimal systems.
If $\pi^{-1}(y_0)$ is a finite set for some $y_0\in Y$
then $\pi$ is almost $n$-to-$1$, where $n=\min_{y\in Y} \#(\pi^{-1}(y))$.
\end{prop}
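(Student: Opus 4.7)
The plan is to exhibit a dense $G_\delta$ subset $A$ of $Y$ on which each fiber of $\pi$ has cardinality exactly $n$, and then transport it to $X$ via semi-openness (Lemma~\ref{lem:factor-semi-open}).

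First I would observe that, since $\pi^{-1}(y_0)$ is finite, the integer $n=\min_{y\in Y}\#(\pi^{-1}(y))$ is well-defined and is actually attained, because a non-empty subset of $\bbn$ has a minimum. Applying Lemma~\ref{lem:phi-n-usc} with the index $n+1$ produces an upper semi-continuous function $\phi_{n+1}$ on $Y$, and by part~(1) of that lemma its zero set coincides with
\[
A:=\{y\in Y:\#(\pi^{-1}(y))=n\},
\]
which is a $G_\delta$ set by part~(3). The identity $\pi^{-1}(gy)=g\pi^{-1}(y)$, combined with the fact that each $g$ acts as a homeomorphism, shows that $\#(\pi^{-1}(\,\cdot\,))$ is $G$-invariant, so $A$ is $G$-invariant; since $A$ is non-empty, minimality of $(Y,G)$ forces any point of $A$ to have dense orbit contained in $A$, so $A$ is dense in $Y$. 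Finally, Lemma~\ref{lem:factor-semi-open} makes $\pi$ semi-open, hence $\pi^{-1}(A)$ is a dense $G_\delta$ in $X$ on which $\#(\pi^{-1}(\pi(x)))=n$, witnessing that $\pi$ is almost $n$-to-$1$.

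The argument is essentially an assembly of earlier tools, and no step poses a serious obstacle. The one subtle choice is to apply Lemma~\ref{lem:phi-n-usc} with index $n+1$ rather than $n$: the function $\phi_n$ is positive on all of $Y$ and carries no information, whereas the zero set of $\phi_{n+1}$ is precisely the set of minimum-cardinality fibers, which is exactly what the upper semi-continuity furnished by part~(2) is designed to control.
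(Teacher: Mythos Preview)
Your proof is correct. The overall framework matches the paper's: both identify the set $A=\{y:\#(\pi^{-1}(y))=n\}$ as the zero set of $\phi_{n+1}$ from Lemma~\ref{lem:phi-n-usc}, conclude it is $G_\delta$ by part~(3), and then argue it is dense. The difference lies in the density step. The paper invokes Proposition~\ref{prop:phi-n-0} together with Lemma~\ref{lem:usc-continuous-points} to identify $A$ with the set of continuity points of the upper semi-continuous function $\phi_{n+1}$, which is automatically a dense $G_\delta$. You instead observe directly that $A$ is non-empty and $G$-invariant, and then use minimality of $(Y,G)$ to deduce density from the dense orbit of any point of $A$. Your route is more elementary in that it bypasses Proposition~\ref{prop:phi-n-0} entirely, while the paper's route exhibits $A$ explicitly as a set of continuity points, which is informative in its own right and aligns with the theme of that section. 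Your explicit pullback via Lemma~\ref{lem:factor-semi-open} is just the spelled-out form of what the paper uses implicitly through the remark preceding the proposition.
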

\begin{proof}
Let $n=\min_{y\in Y} \#(\pi^{-1}(y))$ and $Y_0=\{y\in Y\colon \#(\pi^{-1}(y))=n\}$.
Let $\phi_{n+1}\colon Y\to\mathbb{R}$ as in Lemma~\ref{lem:phi-n-usc}.
Then $Y_0=\{y\in Y \colon \phi_{n+1}(y)=0\}$.
By Proposition~\ref{prop:phi-n-0}, $Y_0$ is the collection of all continuous points of $\phi_{n+1}$.
By Lemma~\ref{lem:usc-continuous-points}, $Y_0$ is a dense $G_\delta$ subset of $Y$.
Thus, $\pi$ is almost $n$-to-$1$.
\end{proof}

\begin{cor}\label{cor:min-card}
	Let $\pi:(X,G)\to (Y, G)$ be a factor map between two minimal systems and $n\in\bbn$.
	Then $\pi$ is almost $n$-to-$1$ if and only if $n=\min_{y\in Y} \#(\pi^{-1}(y))$.
\end{cor}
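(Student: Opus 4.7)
The plan is to reduce both directions to Proposition~\ref{lem:min-card} together with the Baire category theorem, using the fact that $Y$, being a compact metric space, is a Baire space.

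For the backward implication, suppose $n=\min_{y\in Y}\#(\pi^{-1}(y))$. Then in particular $n<\infty$, so some fiber $\pi^{-1}(y_0)$ has exactly $n$ elements and is finite. Proposition~\ref{lem:min-card} then immediately yields that $\pi$ is almost $n$-to-$1$.

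For the forward implication, assume $\pi$ is almost $n$-to-$1$, so there is a dense $G_\delta$ set $Y_0\subseteq Y$ with $\#(\pi^{-1}(y))=n$ for every $y\in Y_0$. Set $m:=\min_{y\in Y}\#(\pi^{-1}(y))$; picking any $y\in Y_0$ shows $m\leq n<\infty$. Since $m$ is finite, Proposition~\ref{lem:min-card} applies and gives a dense $G_\delta$ set $Y_1\subseteq Y$ on which $\#(\pi^{-1}(y))=m$. Then $Y_0\cap Y_1$ is a dense $G_\delta$ subset of $Y$ by the Baire category theorem, and in particular nonempty; for any $y$ in this intersection we read off $m=n$.

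There is no real obstacle here: the corollary is essentially a packaging of Proposition~\ref{lem:min-card}. The only ingredient that needs to be spotted is the use of the Baire category theorem to intersect two dense $G_\delta$'s, which forces the two possible integer values of the almost-to-one index to coincide.
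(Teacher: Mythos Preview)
Your proof is correct and aligns with the paper's treatment: the paper states this as an immediate corollary of Proposition~\ref{lem:min-card} without writing out a proof, and your argument is precisely the natural unpacking of that implication, including the Baire category step needed to reconcile the two dense $G_\delta$ sets in the forward direction.
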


We have the following observation on the minimal points and distal pairs on fibers.

\begin{lem}\label{lem:factor-minimal-distal}
	Let $\pi:(X,G)\to (Y, G)$ be a factor map between two minimal systems and $n\geq 2$. Then the following conditions are equivalent:
	\begin{enumerate}
		\item there exists $y_0\in Y$ with the property that
		there exist pairwise distinct points $x_1,\dotsc,x_n \in\pi^{-1}(y_0)$ such that $(x_1,\dotsc,x_n)$   is a minimal point in $(X^n,G)$;
		\item for every $y\in Y$,
		there exist pairwise distinct points $x_1,\dotsc,x_n \in\pi^{-1}(y)$ such that $(x_1,\dotsc,x_n)$   is a minimal point in $(X^n,G)$;
		\item there exists $y_0\in Y$ with the property that
		there exist pairwise distinct points $x_1,\dotsc,x_n \in\pi^{-1}(y_0)$ such that $(x_i,x_j)$ is distal for $1\leq i<j\leq n$;
		\item for every $y\in Y$,
		there exist pairwise distinct points $x_1,\dotsc,x_n \in\pi^{-1}(y)$ such that $(x_i,x_j)$ is distal for $1\leq i<j\leq n$.
	\end{enumerate}
\end{lem}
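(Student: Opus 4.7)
The implications $(2)\Rightarrow(1)$ and $(4)\Rightarrow(3)$ are trivial, and the observation from Section~2 that a minimal point $(x,y)\in (X^2,G)$ with $x\neq y$ must be distal yields $(1)\Rightarrow(3)$ and $(2)\Rightarrow(4)$: if $(x_1,\dots,x_n)$ is a minimal point in $(X^n,G)$ with pairwise distinct coordinates, then for each $1\le i<j\le n$ the image of $\overline{G(x_1,\dots,x_n)}$ under the coordinate projection $X^n\to X^2$, $(u_1,\dots,u_n)\mapsto (u_i,u_j)$, is a minimal subsystem of $(X^2,G)$ containing $(x_i,x_j)$; so $(x_i,x_j)$ is minimal, and therefore distal. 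Thus it suffices to prove $(3)\Rightarrow(1)$ and $(1)\Rightarrow(2)$.

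For $(3)\Rightarrow(1)$, I would take pairwise distal $x_1,\dots,x_n\in\pi^{-1}(y_0)$ and, by Zorn's lemma, pick a minimal subsystem $M$ of $\overline{G(x_1,\dots,x_n)}\subseteq X^n$. Let $(x_1',\dots,x_n')\in M$, written as $\lim_k g_k(x_1,\dots,x_n)$ along some subsequence. For each $1\le i<j\le n$ and every $h\in G$,
\[
d(hx_i',hx_j')=\lim_k d(hg_kx_i,hg_kx_j)\ge \inf_{g\in G}d(gx_i,gx_j)>0,
\]
so $(x_i',x_j')$ is itself distal, and in particular $x_i'\neq x_j'$. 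Moreover $\pi(x_i')=\lim_k g_k\pi(x_i)=\lim_k g_ky_0$ is the same for every $i$ by $G$-equivariance, so all $x_i'$ lie in the fiber over $y_0':=\pi(x_1')$, and $(x_1',\dots,x_n')$ witnesses~$(1)$ at $y_0'$.

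For $(1)\Rightarrow(2)$, let $M:=\overline{G(x_1,\dots,x_n)}\subseteq X^n$, which is minimal by hypothesis. The continuous $G$-equivariant image $\pi^n(M)\subseteq Y^n$ is closed and $G$-invariant, and equals $\overline{G(y_0,\dots,y_0)}$; since $(Y,G)$ is minimal, this orbit closure is exactly the diagonal $\{(y,y,\dots,y):y\in Y\}$. Hence for every $y\in Y$ there is some $(x_1'',\dots,x_n'')\in M$ with $\pi(x_i'')=y$ for all $i$; this tuple is a minimal point of $(X^n,G)$ because $M$ is minimal, and the already-proved $(1)\Rightarrow(3)$ ensures its coordinates are pairwise distinct.

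The only delicate point, which is not really an obstacle, is the preservation of distality under passage to an orbit-closure limit used in $(3)\Rightarrow(1)$; this follows directly from the infimum definition of distality, exactly as in the displayed inequality above. Everything else is a standard application of minimality of orbit closures and $G$-equivariance of $\pi$.
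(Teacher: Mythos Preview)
Your proof is correct and follows essentially the same line as the paper's: both prove $(3)\Rightarrow(1)$ by passing to a minimal point in the orbit closure of $(x_1,\dots,x_n)$ and using that distality propagates to limits, and both prove $(1)\Rightarrow(2)$ by pushing the minimal set $M=\overline{G(x_1,\dots,x_n)}$ down to $Y$---you phrase this as $\pi^n(M)$ being the diagonal in $Y^n$, while the paper picks $g_i y_0\to z$ and extracts convergent subsequences, which amounts to the same thing. One small imprecision: in your last step of $(1)\Rightarrow(2)$, invoking ``$(1)\Rightarrow(3)$'' does not by itself give that the \emph{new} tuple $(x_1'',\dots,x_n'')$ has distinct coordinates; what you need (and what the paper does) is to first note that the original coordinates are pairwise distal, so that $d(x_i'',x_j'')\ge \inf_{g\in G}d(gx_i,gx_j)>0$ for any point of $M$---or, equivalently, observe that if $x_i''=x_j''$ then every point in $M=\overline{G(x_1'',\dots,x_n'')}$ would have equal $i$-th and $j$-th coordinates, contradicting $x_i\neq x_j$.
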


\begin{proof}
	(2)$\Rightarrow$ (4) and (4)$\Rightarrow$(3) are clear.
	
	(1)$\Rightarrow$(2)
	Fix a point $z\in Y$.
	As $(Y,G)$ is minimal,
	there exists a sequence $\{g_i\}$ in $G$ such that  $\lim_{i\to\infty} g_i y_0=z$.
	As $X$ is compact, without loss of generality  assume that $\lim_{i\to\infty}g_ix_j=x_j'\in X$ for $j=1,\dotsc,n$.
	Then
	\begin{align*}
	\pi(x_j')&=\pi(\lim_{i\to\infty}g_ix_j)=
	\lim_{i\to\infty}\pi(g_ix_j)=\lim_{i\to\infty}g_i\pi(x_j)=\lim_{i\to\infty}g_iy_0=z.
	\end{align*}
	Then $x_1',\dotsc,x_n'\in \pi^{-1}(z)$.
	As $(x_1',\dotsc,x_n')$ is in the orbit closure of $(x_1,\dotsc,x_n)$,  $(x_1',\dotsc,x_n')$ is also a minimal point.
	Moreover, as $x_1,\dotsc,x_n$ pairwise distinct, they are pairwise distal.
	So $x_1',\dotsc,x_n'$ are also pairwise distal and pairwise distinct.
	
	(3)$\Rightarrow$(1)
	Let
	\[
	R_\pi^n=\{(x_1,\dotsc,x_n)\in X^n\colon \pi(x_i)=\pi(x_j),\ 1\leq i<j\leq n\}.
	\]
	Then $R_\pi^n$ is a closed $G$-invariant subset set of $X^n$ and
	$(x_1,\dotsc,x_n)\in R_\pi^n$.
	For $1\leq i<j\leq n$, let $\eta_{ij}=\inf_{g\in G}d(gx_i,gx_j)$.
	As each $(x_i,x_j)$ is distal, $\eta_{ij}>0$ and then $\eta=\min_{1\leq i<j\leq n} \eta_{ij}>0$.
	In particular for every $g\in G$,
	$\min_{1\leq i<j\leq n} d(gx_i,gx_j)\geq \eta$.
	Pick a minimal point $(z_1,\dotsc,z_n)$ in the orbit closure of $(x_1,\dotsc,x_n)$. Then $\min_{1\leq i<j\leq n} d(z_i,z_j)\geq \eta$. As $(z_1,\dotsc,z_n) \in R_\pi^n$, there exists $y\in Y$ such that $z_1,\dotsc,z_n\in \pi^{-1}(y)$.
\end{proof}

It is obvious that if the regionally proximal relation is an equivalence relation and $\pi:(X,G)\to (X_{eq}, G)$ is the factor map
to its maximal equicontinuous factor, then the conclusion of Lemma~\ref{lem:factor-minimal-distal} has another version related to the regionally proximal relation, which we conclude as follows. 

\begin{cor}\label{lem:Q-minimal-distal}
	Let $(X,G)$ be a minimal system. 
	Assume that $n\geq 2$ and $X\times X$ has a dense set of minimal points.
	Then the following conditions are equivalent:
	\begin{enumerate}
		\item there exists $x_0\in X$ with the property that
		there exist pairwise distinct points $x_1,\dotsc,x_n \in Q[x_0]$ such that $(x_1,\dotsc,x_n)$   is a minimal point in $(X^n,G)$;
		\item for every $x\in X$,
		there exist pairwise distinct points $x_1,\dotsc,x_n \in Q[x]$ such that $(x_1,\dotsc,x_n)$   is a minimal point in $(X^n,G)$;
		\item there exists $x_0\in X$ with the property that
		there exist pairwise distinct points $x_1,\dotsc,x_n \in Q[x_0]$ such that $(x_i,x_j)$ is distal for $1\leq i<j\leq n$;
		\item for every $x\in X$,
		there exist pairwise distinct points $x_1,\dotsc,x_n \in Q[x]$ such that $(x_i,x_j)$ is distal for $1\leq i<j\leq n$.
	\end{enumerate}
\end{cor}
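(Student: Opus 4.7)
The plan is to reduce Corollary~\ref{lem:Q-minimal-distal} directly to Lemma~\ref{lem:factor-minimal-distal} by identifying the fibers of the factor map to the maximal equicontinuous factor with the classes of the regionally proximal relation. This is precisely the strategy foreshadowed in the paragraph preceding the corollary.

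First I would invoke the standing hypothesis that $X\times X$ has a dense set of minimal points. As noted in Section~2 (citing Veech's result and the discussion around Theorem~\ref{03}), under this hypothesis $Q(X,G)$ is a closed $G$-invariant equivalence relation on $X$ that coincides with the equicontinuous structure relation $S_{eq}$. Consequently, if $\pi\colon (X,G)\to (X_{eq},G)$ denotes the factor map to the maximal equicontinuous factor, then $R_\pi=Q(X,G)$, and for every $x\in X$ one has the fiber identification
\[
\pi^{-1}(\pi(x))=Q[x].
\]
Since $\pi$ is surjective, the map $x\mapsto \pi(x)$ also gives a bijection between the fibers over points of $X_{eq}$ and the $Q$-classes in $X$, so each fiber $\pi^{-1}(y)$ has the form $Q[x]$ for any $x\in\pi^{-1}(y)$.

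Next I would translate the four conditions of Corollary~\ref{lem:Q-minimal-distal} through this identification. The quantifier ``there exists $x_0\in X$'' in condition~(1) (resp.~(3)) of the corollary corresponds, via $y_0=\pi(x_0)$, to ``there exists $y_0\in X_{eq}$'' in condition~(1) (resp.~(3)) of Lemma~\ref{lem:factor-minimal-distal}; similarly the universal quantifier in condition~(2) (resp.~(4)) of the corollary corresponds to the universal quantifier in condition~(2) (resp.~(4)) of the lemma because every fiber is a $Q$-class and every $Q$-class is a fiber. The internal statements, namely existence of pairwise distinct points in the fiber (respectively $Q$-class) forming a minimal point in $(X^n,G)$ or pairwise distal, match up verbatim.

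Finally I would apply Lemma~\ref{lem:factor-minimal-distal} to the factor map $\pi\colon (X,G)\to (X_{eq},G)$, which yields the equivalence of its four conditions; transporting back through the identification $\pi^{-1}(\pi(x))=Q[x]$ gives the equivalence of conditions~(1)--(4) of the corollary, completing the proof. There is essentially no obstacle here: the only non-formal ingredient is the equality $R_\pi=Q(X,G)$, which is already recorded in Section~2 as an immediate consequence of the denseness of minimal points in $X\times X$.
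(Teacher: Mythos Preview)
Your proposal is correct and follows exactly the approach the paper intends: the paragraph preceding the corollary already records that, under the denseness-of-minimal-points hypothesis, $Q(X,G)=R_\pi$ so that $Q[x]=\pi^{-1}(\pi(x))$, and the corollary is then a verbatim translation of Lemma~\ref{lem:factor-minimal-distal}. There is nothing to add.
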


Let $\pi:(X,G)\to (Y, G)$ be a factor map between two minimal systems.
Define
	\begin{align*}
	r_\pi\colon Y&\to\bbn\cup\{\infty\}\\
	y&\mapsto \sup\{ k\in\bbn\colon \text{ there exist pairwise distint points }  x_1,\dotsc,x_k \in\pi^{-1}(y)\\
	&\qquad \qquad \qquad \ \ \textup{ such that } (x_1,\dotsc,x_k) \text{ is a minimal point in } (X^k,G)
	\}.
	\end{align*}
By Lemma~\ref{lem:factor-minimal-distal},
$r_\pi$ is a constant function.
For convenience, we will also use $r_\pi$ to denote the constant.

\begin{prop}\label{prop:1:minimial-r-pi}
Let $\pi:(X,G)\to (Y, G)$ be a factor map between two minimal systems.
If there exists $y_0\in Y$ such that $\pi^{-1}(y_0)$ is a finite set, then $r_\pi=\min\{ \#(\pi^{-1}(y))\colon y\in Y\}$.
\end{prop}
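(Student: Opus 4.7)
The strategy is to verify both inequalities in $r_\pi = n$, where $n := \min_{y\in Y}\#\pi^{-1}(y)$; this minimum is attained and finite by the hypothesis and Proposition~\ref{lem:min-card}. The upper bound $r_\pi \le n$ is immediate: at any $y_0$ with $\#\pi^{-1}(y_0) = n$ one can have at most $n$ pairwise distinct points in $\pi^{-1}(y_0)$, and $r_\pi$ is a constant function by Lemma~\ref{lem:factor-minimal-distal}.

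For the lower bound $r_\pi \ge n$, I would first argue that $c := \inf_{y\in Y} \phi_n(y) > 0$. By Proposition~\ref{lem:min-card}, $\#\pi^{-1}(y) \ge n$ for every $y\in Y$, so Lemma~\ref{lem:phi-n-usc}(1) gives $\phi_n(y) > 0$ on all of $Y$; if $c$ were $0$, Proposition~\ref{prop:phi-n-0} would produce some $y^*$ with $\phi_n(y^*) = 0$, a contradiction.

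Next I would exploit the $G$-invariance of $Y_0' := \{y\in Y : \#\pi^{-1}(y) = n\}$, which follows from $\pi^{-1}(gy) = g\pi^{-1}(y)$ and the fact that each $g\in G$ is a homeomorphism. Fixing $y_0 \in Y_0'$ and enumerating $\pi^{-1}(y_0) = \{x_1,\dotsc,x_n\}$, for every $g\in G$ the fiber $\pi^{-1}(gy_0) = \{gx_1,\dotsc,gx_n\}$ consists of exactly $n$ distinct points, and on such a fiber the supremum in the definition of $\phi_n$ is realized by any permutation of the $n$ fiber points and equals their minimum pairwise distance. Hence
\[
\min_{1\le i<j\le n} d(gx_i,gx_j) \;=\; \phi_n(gy_0) \;\ge\; c \qquad (g\in G).
\]
The continuous function $\psi\colon X^n\to\mathbb{R}$, $\psi(z_1,\dotsc,z_n) := \min_{1\le i<j\le n} d(z_i,z_j)$, is therefore bounded below by $c$ along the orbit of $(x_1,\dotsc,x_n)$, and by continuity on its closure $\overline{G\cdot(x_1,\dotsc,x_n)}$. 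I would then pick a minimal point $(z_1,\dotsc,z_n)$ in this orbit closure via Zorn's Lemma: its coordinates are pairwise distinct since $\psi(z_1,\dotsc,z_n) \ge c > 0$, and they lie in a common fiber since $R_\pi^n$ is closed, $G$-invariant, and contains $(x_1,\dotsc,x_n)$. This produces a minimal point of $(X^n,G)$ with $n$ pairwise distinct coordinates in a single fiber, giving $r_\pi \ge n$.

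The one delicate step is the propagation of the lower bound from $\phi_n$ on $Y$ to pairwise distances along an orbit in $X^n$. The enabling observations are the $G$-invariance of $Y_0'$ and the fact that on a minimum-size fiber $\phi_n$ is literally the minimum pairwise distance of the fiber points, not merely a supremum over approximating tuples; once these are noticed, the rest of the argument is continuity and Zorn's Lemma.
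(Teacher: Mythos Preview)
Your proof is correct and follows the same line as the paper's: both show $\inf_{y\in Y}\phi_n(y)>0$ via Proposition~\ref{prop:phi-n-0}, take a point in a minimum-cardinality fiber, use the $G$-invariance of $\{y:\#\pi^{-1}(y)=n\}$ to bound the pairwise distances of the fiber points uniformly along the orbit by that infimum, and conclude. The only cosmetic differences are that the paper dispatches the case $n=1$ separately (your use of $\phi_n$ tacitly assumes $n\ge 2$) and invokes Lemma~\ref{lem:factor-minimal-distal} to pass from pairwise distality to the existence of a minimal $n$-tuple, whereas you inline that step by picking a minimal point in the orbit closure directly.
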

\begin{proof}
Let $n=\min\{ \#(\pi^{-1}(y))\colon y\in Y\}$.
It is clear that $r_\pi\leq n$.
If $n=1$, then $r_\pi=1$. Now assume that $n\geq 2$.
Let $\phi_n$ as in Lemma \ref{lem:phi-n-usc}.
Then for every $y\in Y$, $\phi_n(y)>0$.
By Proposition \ref{prop:phi-n-0},
$\inf_{y\in Y} \phi_n(y)>0$.
Let $Y_0=\{y\in Y\colon \#(\pi^{-1}(y))=n\}$.
Then $Y_0$ is a dense $G_\delta$ and $G$-invariant set in $Y$ by Corollary~\ref{cor:min-card}.
Choose $y\in Y_0$ and
enumerate $\pi^{-1}(y)$ as $\{x_1,x_2,\dotsc,x_n\}$.
By Lemma~\ref{lem:factor-minimal-distal}, it is sufficient to show that $(x_i,x_j)$ is distal for $1\leq i<j\leq n$.
For every $g\in G$, $\pi(gx_i)=\pi(gx_j)=gy$. Since $gy\in Y_0, \#(\pi^{-1}(gy))=n,$
 $d(gx_i,gx_j)\geq \phi_n(gy)\geq \inf_{y\in Y} \phi_n(y)>0$.
This implies that $\inf_{g\in G} d(gx_i,gx_j)>0$, that is $(x_i,x_j)$ is distal.
\end{proof}

We say that a factor map $\pi:(X,G)\to (Y, G)$ is \emph{proximal}
if $(x_1,x_2)$ is proximal for any $x_1,x_2\in X$ with $\pi(x_1)=\pi(x_2)$. By Lemma \ref{lem:factor-minimal-distal}
and Proposition \ref{prop:1:minimial-r-pi}, we have the following
two corollaries on proximal factor maps.

\begin{cor}\label{cor:proximal-r-pi}
Let $\pi:(X,G)\to (Y, G)$ be a factor map between two minimal systems.
Then $\pi$ is proximal if and only if $r_\pi=1$.
\end{cor}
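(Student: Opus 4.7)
The plan is to derive both implications from Lemma~\ref{lem:factor-minimal-distal} together with the definitions of proximality and $r_\pi$; Proposition~\ref{prop:1:minimial-r-pi} is not needed here since we are only distinguishing the cases $r_\pi=1$ and $r_\pi\geq 2$, and no finite-fiber hypothesis is required.

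For the forward direction, suppose $\pi$ is proximal. Note first that $r_\pi\geq 1$ is automatic: pick any $x\in X$, so $\{x\}\subset\pi^{-1}(\pi(x))$ and $x$ is a minimal point of $(X,G)$ (as $(X,G)$ is minimal). Assume for contradiction that $r_\pi\geq 2$. Then by the definition of $r_\pi$ and the equivalence (1)$\Leftrightarrow$(3) in Lemma~\ref{lem:factor-minimal-distal}, there exist $y\in Y$ and distinct $x_1,x_2\in\pi^{-1}(y)$ with $(x_1,x_2)$ distal, i.e.\ $\inf_{g\in G}d(gx_1,gx_2)>0$. This directly contradicts proximality of $\pi$. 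Hence $r_\pi=1$.

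For the backward direction, suppose $r_\pi=1$ and assume for contradiction that $\pi$ is not proximal. Then there exist distinct $x_1,x_2\in X$ with $\pi(x_1)=\pi(x_2)$ and $\eta:=\inf_{g\in G}d(gx_1,gx_2)>0$. By Zorn's Lemma applied to $(X\times X,G)$, pick a minimal point $(z_1,z_2)$ in the orbit closure of $(x_1,x_2)$. Since $R_\pi$ is a closed $G$-invariant subset of $X\times X$, we have $\pi(z_1)=\pi(z_2)$, and since $d(gz_1,gz_2)\geq\eta>0$ for all $g\in G$ (by continuity and $G$-invariance of the infimum along the orbit closure), in particular $z_1\neq z_2$. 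Thus $(z_1,z_2)$ realizes condition (1) of Lemma~\ref{lem:factor-minimal-distal} at $y_0=\pi(z_1)$ with $n=2$, giving $r_\pi\geq 2$, a contradiction.

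There is no substantive obstacle; the only point that needs a brief justification is why the minimal point $(z_1,z_2)$ still has $z_1\neq z_2$, which follows because the set $\{(u,v)\in X\times X\colon d(u,v)\geq\eta\}$ is closed and $G$-invariant and contains the full orbit of $(x_1,x_2)$, hence contains its orbit closure and in particular $(z_1,z_2)$. This completes the argument.
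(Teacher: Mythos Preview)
Your proof is correct and matches the paper's approach, which simply deduces the corollary from Lemma~\ref{lem:factor-minimal-distal} (the paper's accompanying reference to Proposition~\ref{prop:1:minimial-r-pi} is for the companion corollary on almost $1$-to-$1$ maps, and you are right that it is not needed here). Your backward direction in fact re-proves the implication (3)$\Rightarrow$(1) of Lemma~\ref{lem:factor-minimal-distal} for $n=2$; you could have cited it directly once you had the distal pair $(x_1,x_2)$ in a fiber, but the argument as written is sound.
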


\begin{cor}
	Let $\pi:(X,G)\to (Y, G)$ be a factor map between two minimal systems.
If $\pi$ is almost 1-to-1, then $\pi$ is proximal.
\end{cor}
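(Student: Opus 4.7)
The plan is to deduce this corollary as an immediate consequence of Proposition~\ref{prop:1:minimial-r-pi} and Corollary~\ref{cor:proximal-r-pi}. The almost 1-to-1 hypothesis supplies a dense $G_\delta$ subset $Y_0\subset Y$ on which every fiber is a singleton; in particular, there exists $y_0\in Y$ with $\#(\pi^{-1}(y_0))=1$, which is a finite set. This is exactly the hypothesis needed to invoke Proposition~\ref{prop:1:minimial-r-pi}.

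First I would fix $y_0\in Y$ with $\#(\pi^{-1}(y_0))=1$, observe that $\pi^{-1}(y_0)$ is finite, and apply Proposition~\ref{prop:1:minimial-r-pi} to conclude
\[
r_\pi=\min\{\#(\pi^{-1}(y))\colon y\in Y\}.
\]
Since this minimum is bounded above by $\#(\pi^{-1}(y_0))=1$ and every fiber is non-empty, the minimum equals $1$, so $r_\pi=1$. Then Corollary~\ref{cor:proximal-r-pi} immediately gives that $\pi$ is proximal.

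There is essentially no obstacle here: the corollary is a two-line consequence of already-established results. The only minor subtlety is noting that ``almost 1-to-1'' does produce a genuine point $y_0\in Y$ whose fiber is a singleton (not merely a generic $x\in X_0$ with $\#(\pi^{-1}(\pi(x)))=1$), but this follows at once by taking $y_0=\pi(x)$ for any $x\in X_0$. So the write-up is really just a pointer to the preceding two results.
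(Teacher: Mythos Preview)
Your proposal is correct and matches the paper's approach: the paper gives no explicit proof but states that this corollary (together with Corollary~\ref{cor:proximal-r-pi}) follows from Lemma~\ref{lem:factor-minimal-distal} and Proposition~\ref{prop:1:minimial-r-pi}, which is exactly the route you take (your use of Corollary~\ref{cor:proximal-r-pi} is just the packaged form of the relevant part of Lemma~\ref{lem:factor-minimal-distal}).
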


\begin{cor} \label{cor:almost-1-1-not-proximal}
Let $\pi:(X,G)\to (Y, G)$ be a factor map between two minimal systems.
If $\pi$ is  proximal but not almost $1$-to-$1$, then for every $y\in Y$, $\pi^{-1}(y)$ is an infinite set.	
\end{cor}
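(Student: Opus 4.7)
The plan is to argue by contradiction: assume there exists $y_0\in Y$ with $\pi^{-1}(y_0)$ finite, and derive that $\pi$ must be either almost $1$-to-$1$ or non-proximal, each of which contradicts a hypothesis.

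First I would invoke Proposition~\ref{lem:min-card}, which applies as soon as one fiber is finite, to conclude that $\pi$ is almost $n$-to-$1$ for $n := \min_{y\in Y}\#(\pi^{-1}(y))$, which is a well-defined positive integer under our assumption. The argument then splits on the value of $n$.

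If $n=1$, then by definition $\pi$ is almost $1$-to-$1$, directly contradicting the hypothesis that $\pi$ is \emph{not} almost $1$-to-$1$. If $n\geq 2$, I would apply Proposition~\ref{prop:1:minimial-r-pi} to obtain $r_\pi=n\geq 2$; this gives pairwise distinct points $x_1,\dotsc,x_n$ in some fiber $\pi^{-1}(y)$ forming a minimal point of $(X^n,G)$, hence pairwise distal by the remark following the definition of distal pairs. But $\pi$ is proximal, so by Corollary~\ref{cor:proximal-r-pi} we have $r_\pi=1$, a contradiction.

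There is no real obstacle here; the statement is essentially a direct combination of the earlier structural results on fibers of factor maps. The only thing to be careful about is that we are allowed to apply Proposition~\ref{lem:min-card} under the weak hypothesis that \emph{some} fiber is finite (not every fiber), which is exactly what is stated there. Once this is in place, the dichotomy on $n$ is immediate, and the hypotheses ``proximal'' and ``not almost $1$-to-$1$'' rule out both branches, forcing every fiber to be infinite.
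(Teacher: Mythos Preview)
Your argument is correct and aligns with the paper's intended reasoning: the paper states this corollary without an explicit proof, treating it as an immediate consequence of Proposition~\ref{prop:1:minimial-r-pi} and Corollary~\ref{cor:proximal-r-pi}, which is exactly the combination you invoke. One minor streamlining: once you assume some fiber is finite, Proposition~\ref{prop:1:minimial-r-pi} directly gives $r_\pi = \min_{y\in Y}\#(\pi^{-1}(y))$, and since $\pi$ is proximal this forces $r_\pi=1$ by Corollary~\ref{cor:proximal-r-pi}, hence $\min_{y}\#(\pi^{-1}(y))=1$ and $\pi$ is almost $1$-to-$1$ by Corollary~\ref{cor:min-card}---so the case split on $n$ and the detour through distal pairs are not needed, though they are not wrong.
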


\section{$N$-sensitivity}
In this section we introduce the concept of $n$-sensitivity for group actions and prove Theorem~\ref{thm:main-1}. 

Let $(X,G)$ be a topological dynamical system and $n\geq 2$.
We say that $(X,G)$ is \emph{$n$-sensitive}
if there exists $\delta>0$ such that for every opene subset $U$ of $X$,
there exist pairwise distinct points $x_1,x_2,\dotsc,x_n\in U$ and
$g\in G$ such that $\min_{1\le i<j\le n}d(gx_i,gx_j)>\delta$.

Now we begin to prove Theorem~\ref{thm:main-1}.
\begin{proof}[Proof of Theorem~\ref{thm:main-1}]
	($\Rightarrow$) Assume $(X,G)$ is $n$-sensitive. Pick a point $x \in X$. Let $U_m$ be a sequence of open neighborhoods of $x$ with $\diam (U_m)<\frac{1}{m}$ for $m\in\bbn$. Then there is $\delta>0$ such that for each $m\in \bbn$, there exist $x_1^m,\ldots,x_n^m\in U_m$ and $s_m\in G$ satisfying $d(s_mx_i^m,s_mx_j^m)>\delta$ for $1\leq i<j\leq n$. Without loss of generality we assume that $s_mx_i^m\to x_i\ (m\to\infty)$ for $i=1,\ldots,n$.
	Then $d(x_i,x_j)\geq \delta$ for $1\leq i<j\leq n$, which shows that $x_1,\ldots,x_n$ are pairwise distinct.
	Then it is sufficient to show that$(x_1,\ldots,x_n)\in Q_n(X,G)$. For any $\eps>0$, take $V_i=B(x_i,\eps)$, $i=1,\dotsc, n$. There exists $m\in\bbn$ such that $s_mx_i^m\in V_i$, $i=1,\ldots,n$ and $\frac{1}{m}<\eps$. It is clear that for any $1\leq i<j\leq n$,
	\[
	d(s_m^{-1}(s_mx_i^m), s_m^{-1}(s_mx_j^m))=d(x_i^m, x_j^m)< \diam (U_m)<\frac{1}{m}<\eps.
	\]
	Thus $(x_1,\dotsc, x_n)\in Q_n(X,G)$.
	
	($\Leftarrow$)
	Since there exists $x\in X$ such that $\#Q[x]\geq n$, there exist
	$(x_1,x_2,\dotsc,x_n)\in Q_n(X,G)\setminus \Delta^{(n)}(X)$ by Theorem \ref{03},  where $\Delta^{(n)}(X)=\{(x_1,x_2,\dotsc,x_n)\in X^n\colon
	\exists i<j \text{ s.t. } x_i=x_j\}$.
	Let $\delta=\frac{1}{3}\min_{1\le i<j\le n}d(x_i,x_j)>0$.
	For any $m\in\bbn$, there are points
	$y_i^m\in B(x_i,\frac{1}{m})$ and $s_m\in G$ such that $d(s_my_i^m,s_my_j^m)<\frac{1}{m}$ for $1\leq i<j\leq n$.
	As $X$ is compact, without loss of generality
	assume that $\lim_{m\to\infty}s_my_i^m=x\in X$ for any $1\le i\le n$.
	Since $(X,G)$ is minimal, $x$ has a dense orbit.
	For any opene subset $U$ of $X$, there is $t\in G$ such that $tx\in U$. So there exists $m\in\bbn$ such that $ts_my_i^m\in U$ for all $1\le i\le n$ and $\frac{1}{m}<\delta$.
	Since $(ts_m)^{-1} (ts_my_i^m)=y_i^m\in B(x_i,\frac{1}{m}),\ i=1,\dots,n$,
	one has 
	\[
	d((ts_m)^{-1} (ts_my_i^m), (ts_m)^{-1} (ts_my_j^m))=d(y_i^m, y_j^m)>\delta, \ \text{for any}\ 1\leq i<j\leq n.
	\]	
	Then $(X,G)$ is $n$-sensitive.
\end{proof}

We have the following direct consequences of Theorems~\ref{thm:main-1} and~\ref{thm:main-1-x}.
\begin{cor}
	Let $(X,G)$ be a minimal dynamical system. Assume that $n\geq 2$ and $X\times X$ has a dense set of minimal points.
	Then the following are equivalent:
	\begin{enumerate}
		\item $(X,G)$ is $n$-sensitive but not $(n+1)$-sensitive;
		\item there exists $x_0\in X$ such that $\#Q[x_0]=n$ and for any $x\in X$, $\#Q[x]<n+1$;
		\item $\max_{y\in X_{eq}} \#(\pi^{-1}(y))=n$,
		where $\pi\colon (X,G)\to (X_{eq},G)$ is the factor map to its maximal equicontinuous factor. 
	\end{enumerate}
\end{cor}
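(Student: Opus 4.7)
The plan is to deduce this corollary as a direct bookkeeping consequence of Theorem~\ref{thm:main-1}, together with the reformulation used to pass to Theorem~A$'$. I will first apply Theorem~\ref{thm:main-1} at the two indices $n$ and $n+1$, then combine the two statements, and finally translate cardinalities of $Q[x]$ into cardinalities of fibers $\pi^{-1}(y)$.

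First I would unpack (1). By Theorem~\ref{thm:main-1}, $(X,G)$ is $n$-sensitive if and only if there exists $x\in X$ with $\#Q[x]\ge n$, and $(X,G)$ is $(n+1)$-sensitive if and only if there exists $x\in X$ with $\#Q[x]\ge n+1$. Negating the second, $(X,G)$ fails to be $(n+1)$-sensitive precisely when $\#Q[x]<n+1$ for every $x\in X$. Combining, (1) is equivalent to the existence of some $x_0\in X$ with $\#Q[x_0]\ge n$ together with $\#Q[x]\le n$ for all $x\in X$; any such $x_0$ must then satisfy $\#Q[x_0]=n$, which is exactly (2). The converse is immediate: from (2) one picks $x_0$ witnessing $n$-sensitivity via Theorem~\ref{thm:main-1}, and the bound $\#Q[x]<n+1$ for every $x$ rules out $(n+1)$-sensitivity. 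Hence (1)$\Leftrightarrow$(2).

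Next I would establish (2)$\Leftrightarrow$(3). Under the hypothesis that $X\times X$ has a dense set of minimal points, the discussion following Theorem~\ref{03} in the excerpt shows that $Q(X,G)$ is a closed $G$-invariant equivalence relation coinciding with the equicontinuous structure relation $S_{eq}=R_\pi$, where $\pi\colon(X,G)\to(X_{eq},G)$ is the factor map to the maximal equicontinuous factor. Therefore for every $x\in X$ one has
\[
Q[x]=\pi^{-1}(\pi(x)),
\]
and since $\pi$ is surjective,
\[
\{\#Q[x]\colon x\in X\}=\{\#(\pi^{-1}(y))\colon y\in X_{eq}\}.
\]
Condition (2) says that this common set of cardinalities is bounded above by $n$ and contains $n$, which is exactly the statement $\max_{y\in X_{eq}}\#(\pi^{-1}(y))=n$, i.e.\ (3).

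There is really no obstacle here: the corollary is a purely formal consequence of Theorem~\ref{thm:main-1} once one records (i) that ``$n$-sensitive but not $(n+1)$-sensitive'' means the supremum of $\#Q[x]$ equals $n$ and is attained, and (ii) that the dense minimal points assumption identifies $Q[x]$ with the $\pi$-fiber through $x$. The only minor care needed is to confirm that the ``max'' in (3) is literally attained, but this is automatic as soon as some $\#Q[x_0]=n$ while all other fibers have cardinality at most $n$.
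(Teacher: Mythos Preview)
Your proof is correct and follows exactly the route the paper intends: the paper states this corollary without proof as a ``direct consequence'' of Theorems~\ref{thm:main-1} and~\ref{thm:main-1-x}, and your argument is precisely the unpacking of that claim---applying Theorem~\ref{thm:main-1} at levels $n$ and $n+1$ for (1)$\Leftrightarrow$(2), and using the identification $Q[x]=\pi^{-1}(\pi(x))$ (i.e.\ $R_\pi=Q(X,G)$) for (2)$\Leftrightarrow$(3).
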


\begin{cor}
	Let $(X,G)$ be a minimal dynamical system. Assume that  $X\times X$ has a dense set of minimal points.
	Then the following are equivalent:
	\begin{enumerate}
		\item $(X,G)$ is $n$-sensitive for every $n\geq 2$;
		\item for every $n\geq 2$, there exists $x_0\in X$ such that $\#Q[x_0]\geq n$;
		\item for every $n\geq 2$, $\sup_{y\in X_{eq}} \#(\pi^{-1}(y))\geq n$,
		where $\pi\colon (X,G)\to (X_{eq},G)$ is the factor map to its maximal equicontinuous factor. 
	\end{enumerate}
\end{cor}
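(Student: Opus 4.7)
The plan is to derive this corollary as an essentially immediate consequence of Theorem~\ref{thm:main-1} (and its reformulation Theorem~\ref{thm:main-1-x}), by simply quantifying the $n$-by-$n$ equivalence over all $n\geq 2$. Since Theorem~\ref{thm:main-1} already tells us, for each fixed $n\geq 2$, that $(X,G)$ is $n$-sensitive if and only if there exists $x\in X$ with $\#Q[x]\geq n$, the universal statement over $n$ translates directly.

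For (1)$\Leftrightarrow$(2) I would argue: assuming (1), fix any $n\geq 2$; since $(X,G)$ is $n$-sensitive, Theorem~\ref{thm:main-1} provides some $x_0\in X$ (depending on $n$) with $\#Q[x_0]\geq n$, giving (2). Conversely, assuming (2), for each $n\geq 2$ pick $x_0\in X$ with $\#Q[x_0]\geq n$ and apply the reverse direction of Theorem~\ref{thm:main-1} to conclude that $(X,G)$ is $n$-sensitive. For (2)$\Leftrightarrow$(3) I would invoke the standing hypothesis that $X\times X$ has a dense set of minimal points; as noted in the excerpt between Theorem~\ref{03} and Theorem~\ref{thm:main-1-x}, this forces $Q(X,G)=R_\pi$, hence $Q[x]=\pi^{-1}(\pi(x))$ for every $x\in X$, and consequently
\[
\sup_{x\in X}\#Q[x]=\sup_{y\in X_{eq}}\#(\pi^{-1}(y)).
\]
Thus the condition ``for every $n\geq 2$ there is $x_0$ with $\#Q[x_0]\geq n$'' is the same as ``for every $n\geq 2$, $\sup_{y\in X_{eq}}\#(\pi^{-1}(y))\geq n$''.

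There is no real obstacle here: the corollary is a packaging of the main theorem plus the standard identification $Q[x]=\pi^{-1}(\pi(x))$ under the density-of-minimal-points hypothesis. The only care needed is to emphasize that the point $x_0$ in (2) is allowed to depend on $n$, matching the form of Theorem~\ref{thm:main-1}, and that both (2) and (3) are most naturally stated in terms of a supremum that need not be attained.
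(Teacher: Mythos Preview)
Your proposal is correct and matches the paper's approach: the paper states this corollary as a direct consequence of Theorems~\ref{thm:main-1} and~\ref{thm:main-1-x} without further proof, and your argument simply spells out that immediate deduction together with the identification $Q[x]=\pi^{-1}(\pi(x))$ coming from $R_\pi=Q(X,G)$.
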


\section{Thick $n$-sensitivity}
In this section we introduce the concept of thick $n$-sensitivity for group actions and prove Theorem~\ref{thm:main-2-x}.
We also give some consequences of Theorem~\ref{thm:main-2-x}.

Let $(X,G)$ be a topological dynamical system and $n\geq 2$.
We say that $(X,G)$ is \emph{thickly $n$-sensitive}
if there is a constant $\delta>0$ such that for any opene subset $U$ of $X$, \[N(U,\delta;n):=\{g\in G: \exists
x_1,x_2,\cdots,x_n\in U \text{ such that}
\min_{1\le i<j\le n}d(gx_i,gx_j)>\delta\}\]
is a thick set.

\begin{prop}\label{prop:r-1-not-thk}
	Let $(X,G)$ be a minimal system and $\pi \colon  (X,G) \to (X_{eq}, G)$ the factor map. If $n\ge 2$ and $\min_{y\in X_{eq}} \#(\pi^{-1}(y))<n$, then $(X,G)$ is not thickly $n$-sensitive.
\end{prop}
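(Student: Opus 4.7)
The plan is to prove the statement directly: assuming some fiber of $\pi$ has at most $n-1$ points, for every $\delta>0$ I will exhibit an opene subset $U$ of $X$ such that $N(U,\delta;n)$ is not thick, which contradicts thick $n$-sensitivity. The core idea is a pigeonhole covering of the preimage of a small neighborhood of that exceptional fiber, translated around $X_{eq}$ via the fact that $G$ acts isometrically on the equicontinuous factor.

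Fix $\delta>0$. Choose $y_0\in X_{eq}$ with $\pi^{-1}(y_0)=\{a_1,\dotsc,a_m\}$ for some $m\le n-1$, and set $W_i:=B(a_i,\delta/3)\subset X$, so $\diam(W_i)<\delta$. Since $X$ is compact and $\pi$ continuous, the image $\pi\bigl(X\setminus\bigcup_{i=1}^m W_i\bigr)$ is closed in $X_{eq}$; it does not contain $y_0$, because every preimage of $y_0$ lies in $\bigcup_i W_i$. Hence
\begin{equation*}
V':=X_{eq}\setminus \pi\bigl(X\setminus\textstyle\bigcup_{i=1}^m W_i\bigr)
\end{equation*}
is an open neighborhood of $y_0$ satisfying $\pi^{-1}(V')\subset\bigcup_{i=1}^m W_i$. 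Using that $(X_{eq},G)$ is equicontinuous, fix a compatible $G$-invariant metric $\rho$ on $X_{eq}$. Pick $r>0$ with $B_\rho(y_0,2r)\subset V'$, set $V:=B_\rho(y_0,r)$ and $U:=\pi^{-1}(V)$; then $U$ is opene in $X$.

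Next I would exhibit a finite ``blocking set'' $F\subset G$. By minimality of $(X_{eq},G)$ and compactness of $X_{eq}$, finitely many translates $s_1V,\dotsc,s_kV$ cover $X_{eq}$; let $F:=\{s_1^{-1},\dotsc,s_k^{-1}\}$. Given $g\in G$, there is $s\in F$ with $sgy_0\in V$, i.e.\ $\rho(sgy_0,y_0)<r$, and $G$-invariance of $\rho$ then yields $sgV=B_\rho(sgy_0,r)\subset B_\rho(y_0,2r)\subset V'$. Using $G$-equivariance of $\pi$,
\begin{equation*}
sgU=\pi^{-1}(sgV)\subset \pi^{-1}(V')\subset \bigcup_{i=1}^m W_i.
\end{equation*}
For any $x_1,\dotsc,x_n\in U$, the $n$ images $sgx_1,\dotsc,sgx_n$ lie in a union of $m\le n-1$ sets, so by pigeonhole two of them fall in a common $W_i$, giving $\min_{1\le i<j\le n}d(sgx_i,sgx_j)<\delta$. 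Thus $sg\notin N(U,\delta;n)$, so $Fg\not\subset N(U,\delta;n)$, and since $g$ was arbitrary $N(U,\delta;n)$ is not thick.

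The only real subtlety is to ensure the covering $\pi^{-1}(V')\subset\bigcup_iW_i$ by $n-1$ small-diameter pieces survives under arbitrary translates of the base set $V$; fibers of $\pi$ and their ``sheet'' decompositions over different points of $X_{eq}$ are in general irregular and not related by any obvious continuity. Equicontinuity of $X_{eq}$ is exactly what overcomes this: a $G$-invariant metric on $X_{eq}$ carries balls to balls of the same radius, so $sgV$ is guaranteed to sit inside the slightly larger $V'$ whenever $sgy_0$ is near $y_0$, a condition that can be achieved for some $s$ in a fixed finite set $F$ by minimality of $(X_{eq},G)$.
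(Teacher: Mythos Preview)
Your proof is correct and follows essentially the same route as the paper's: both cover a small fiber by fewer than $n$ sets of diameter $<\delta$, pull this back to a neighborhood via the $G$-invariant metric on $X_{eq}$, and use pigeonhole to block $n$-separation. The only cosmetic difference is that the paper phrases the conclusion via a syndetic return set $S=\{g:gy_0\in B_\rho(y_0,\eps)\}$ disjoint from $N(U,\delta;n)$, whereas you directly exhibit the finite witness $F$ to non-thickness; these are two sides of the same thick/syndetic duality.
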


\begin{proof}
	We prove this result by contradiction.
	Assume that $(X, G)$ is thickly $n$-sensitive with a sensitive constant $\delta$.
	Let $l=\min_{y\in X_{eq}} \#(\pi^{-1}(y))$.
	There exists some $y_0\in X_{eq}$
	such that $\pi^{- 1} (y_0)= \{x_1,x_2,\dotsc,x_l\}$.
	As $(X_{eq},G)$ is equicontinuous, choose a compatible metric $\rho$ on $X_{eq}$ such that $\rho (gy_1,gy_2)=\rho (y_1,y_2)$ for any $g\in G$ and $y_1,y_2\in X_{eq}$.
	We take an open neighborhood $W_i$ of $x_i$ for $1\le i\le l$ such that $\diam (W_i)< \delta$. 
	
	We claim that there exists an open neighborhood $V$ of $y_0$ such that $\pi^{- 1} (V)\subset \cup_{i=1}^lW_i$.
	Otherwise there exists $x^m\in X\setminus \cup_{i=1}^lW_i$ with $\pi(x^m)=y^m$ and $y^m\rightarrow y_0$.
	Without loss of generality we assume  $x^m\rightarrow x_0$, 
	then $x_0\in \pi^{-1}(y_0)\cap (X\setminus \cup_{i=1}^lW_i)$, which contradict
	to the fact $\pi^{-1}(y_0)=\{x_1,x_2,\cdots,x_l\}\subset \cup_{i=1}^lW_i$.
	
	Let $\eps> 0$ be small enough such that $B_{\rho}(y_0,2\eps)\subset V$.
	Take $U= \pi^{- 1} (B_{\rho}(y_0,\eps))$ and set $S=  \{g\in G\colon gy_0\in B_{\rho}(y_0,\eps)\}$.
	Note that $S$ is syndetic as $y_0$ is a minimal point.
	For any $y\in B_{\rho}(y_0,\eps)$ and $g\in S$,
	$\rho(gy_0, gy)=\rho(y_0, y)< \eps$ and then 
	\[\rho(y_0, g y) \leq \rho(y_0,gy_0)+\rho(gy_0,gy)< 2 \eps.\]
	This gives $g B_{\rho}(y_0,\eps)\subset V$, and hence
	\[g U= g \pi^{- 1} (B_{\rho}(y_0,\eps))= \pi^{- 1} (g B_{\rho}(y_0,\eps))\subset \pi^{- 1} (V)\subset \cup_{i=1}^lW_i.\]
	Since $l\le n-1$, for $g\in S$ and 
	any $z_1,z_2,\cdots,z_n\in U$,
	there exist $1\le s< t\le n$ and $i\in\{1,\dotsc,l\}$ such that $gz_s,gz_t\in W_i$. Then  $d(gz_s,gz_t)<\delta$ as $\diam (W_i)< \delta$.
	This implies that $N(U,\delta;n)\cap S=\emptyset$, and then $N(U,\delta;n)$ is not thick,
	which contradicts to the assumption that $\delta$ is a thickly $n$-sensitive constant.	
\end{proof}
 
\begin{proof}[Proof of Theorem \ref{thm:main-2-x}]
($\Rightarrow$) It follows from Proposition \ref{prop:r-1-not-thk}
	
($\Leftarrow$) Let $\delta=\frac{1}{2}\inf_{y\in X_{eq}}\phi _{n}(y)$.
By Proposition~\ref{prop:phi-n-0}, $\delta>0$.
Since $\pi$ is an extension between $(X,G)$ and its maximal equicontinuous factor and $X\times X$ has a dense set of minimal points,
$R_\pi = Q(X,G)$ and 
$R_\pi^n = Q_{n}(X,G)$ by Theorem \ref{03}.
Fix a finite subset $F_m=\{g_1,g_2,\dotsc,g_m\}$ of $G$ and a point $y\in X_{eq}$.
Since $\delta=\frac{1}{2}\inf_{y\in X_{eq}}\phi _{n}(y)>0$,
for each $i=1,2,\dotsc,m$,
there exist $u_i^1,\dotsc,u_i^n\in \pi^{-1}(g_{i}y)$ such that
\[
\min_{1\le j\neq k\le n}d(u_i^j,u_i^k)\geq 2\delta.
\]
Taking
$x_i^j=g_{i}^{-1}u_i^j$ for $i=1,\dotsc,m$ and $j=1,\dotsc,n$, 
one has	$\pi(x_i^j)=y$ and
\[
\min_{1\le j\neq t\le n}d(g_{i}x_{i}^j,g_{i}x_{i}^t)\geq 2\delta.
\]	
Then 
\[(x_i^j)_{1\le i\le m, 1\le j\le n}\in R_{\pi}^{nm} = Q_{nm}(X,G).\]
It is clear that when we fix $i\in\{1,\dotsc,m\}$, $(x_i^j)_{1\le j\le n}$ are pairwise distinct.
For any $k\in\bbn$, there are points
$y_i^{j,k}\in B(x_i^j,1/k)$ and $s_k\in G$ such that $d(s_ky_i^{j,k},s_ky_l^{t,k})<1/k$ for $1\leq i<l\leq m$, $1\leq j<t\leq n$ and for any $1\leq i\leq m$, $(y_i^{j,k})_{1\le j\le n}$ are pairwise distinct when $k$ is large enough.
As $X$ is compact, without loss of generality
assume that $\lim_{k\to\infty}s_ky_i^{j,k}=x\in X$ for any $1\leq i\leq m$, $1\leq j\leq n$.
Since $(X,G)$ is minimal, $x$ has a dense orbit.
For any opene subset $U$ of $X$, there is $t\in G$ such that $tx\in U$. 
Fix $i\in\{1,\dotsc,m\}$ and choose a large enough $k$ such that 
$ts_ky_i^{j,k}\in U$ for all $1\leq j\leq n$ and 
\[
\min_{1\le j\neq t\le n}d(g_{i}B(x_i^j,1/k),g_{i}B(x_i^t,1/k))>\delta\]
Since $(ts_k)^{-1} (ts_ky_i^{j,k})=y_i^{j,k}\in B(x_i^j,1/k),\ 1\leq j\leq n$,

\[\min_{1\le j\neq t\le n}d(g(ts_k)^{-1} (ts_ky_i^{j,k}), g(ts_k)^{-1} (ts_ky_i^{t,k}))>\delta,\ \forall g\in F_m.\]
It is clear that $(ts_ky_i^{j,k})_{1\leq j\leq n}$ are pairwise distinct and we obtain our result.
\end{proof}

We have the following consequences of Theorems~\ref{thm:main-2} and~\ref{thm:main-2-x}.

\begin{cor}\label{cor:thick-n-n+1}
	Let $(X,G)$ be a minimal system.
	Assume that $n\geq 2$ and $X\times X$ has a dense set of minimal points. 
	Then the following are equivalent:
	\begin{enumerate}
		\item $(X,G)$ is thickly $n$-sensitive but not thickly $(n+1)$-sensitive;
		\item there exists $x_0\in X$ such that $\#Q[x_0]=n$ and $\#Q[x]\geq n$ for any $x\in X$;
		\item $\min_{y\in X_{eq}}\#(\pi^{-1}(y))= n$,
		where $\pi\colon (X,G)\to (X_{eq},G)$ is the factor to its maximal equicontinuous factor.
	\end{enumerate}
\end{cor}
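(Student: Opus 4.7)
The plan is to deduce this corollary directly from Theorems~\ref{thm:main-2} and~\ref{thm:main-2-x}; no new dynamical input is needed, only bookkeeping of these two equivalences applied at the consecutive levels $n$ and $n+1$. The hypothesis that $X\times X$ has a dense set of minimal points is already inherited from those theorems, so it can be used without further comment.

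For $(1)\Leftrightarrow(2)$, I will invoke Theorem~\ref{thm:main-2} at both levels. By that theorem, thick $n$-sensitivity is equivalent to the condition $\#Q[x]\geq n$ for every $x\in X$, while failure of thick $(n+1)$-sensitivity amounts to the existence of some $x_0\in X$ with $\#Q[x_0]< n+1$, that is, $\#Q[x_0]\leq n$. Conjoining these two statements yields precisely condition (2): $\#Q[x]\geq n$ for every $x$, and $\#Q[x_0]=n$ for some $x_0$. The reverse implication is the same argument read backward.

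For $(1)\Leftrightarrow(3)$, I will apply Theorem~\ref{thm:main-2-x} at levels $n$ and $n+1$ to rewrite (1) as the two inequalities $\inf_{y\in X_{eq}}\#(\pi^{-1}(y))\geq n$ and $\inf_{y\in X_{eq}}\#(\pi^{-1}(y))< n+1$. The only small observation required is that because $\#(\pi^{-1}(y))$ takes values in $\bbn\cup\{\infty\}$, a finite infimum is automatically attained; so there exists $y_0\in X_{eq}$ with $\#(\pi^{-1}(y_0))=n$, and this $n$ is then the minimum. Proposition~\ref{lem:min-card} reaffirms this, since as soon as some fiber is finite of size $n$, $\pi$ must be almost $n$-to-$1$ over a dense $G_\delta$ in $X_{eq}$, confirming $n=\min_{y\in X_{eq}}\#(\pi^{-1}(y))$.

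There is no genuine obstacle here: the whole statement is a formal consequence of the two previously established equivalences, modulo the trivial but necessary remark that an integer-valued infimum is attained whenever it is finite. Accordingly my write-up will be short, essentially just displaying the two chains $\textup{(1)}\Leftrightarrow\textup{(2)}$ and $\textup{(1)}\Leftrightarrow\textup{(3)}$ described above.
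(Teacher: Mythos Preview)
Your proposal is correct and matches the paper's intent: the corollary is stated there as an immediate consequence of Theorems~\ref{thm:main-2} and~\ref{thm:main-2-x} with no proof given, and your argument is precisely the routine unpacking of those two theorems at levels $n$ and $n+1$. The remark that an $\bbn\cup\{\infty\}$-valued infimum, once finite, is attained is the only point worth noting, and you have it; the appeal to Proposition~\ref{lem:min-card} is optional reinforcement.
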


\begin{cor}
	Let $(X,G)$ be a minimal dynamical system. Assume that $X\times X$ has a dense set of minimal points.
	Then the following are equivalent:
	\begin{enumerate}
		\item $(X,G)$ is thickly $n$-sensitive for every $n\geq 2$;
		\item $Q[x]$ is an infinite set for every $x\in X$;
		\item  $\pi^{-1}(y_0)$ is an infinite set for every $y_0\in X_{eq}$, where $\pi\colon (X,G)\to (X_{eq},G)$ is the factor to its maximal equicontinuous factor.
	\end{enumerate}
\end{cor}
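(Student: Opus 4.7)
The plan is to obtain this corollary as an immediate consequence of Theorems~\ref{thm:main-2} and~\ref{thm:main-2-x}, together with the observation (recorded in Section~2) that under the hypothesis that $X\times X$ has a dense set of minimal points the regionally proximal relation $Q(X,G)$ coincides with $R_\pi$, so that $Q[x]=\pi^{-1}(\pi(x))$ for every $x\in X$. Since $\pi$ is surjective, this identity makes the equivalence of (2) and (3) automatic: every fibre $\pi^{-1}(y)$ is infinite if and only if every class $Q[x]$ is infinite.

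For (1)$\Leftrightarrow$(2) I would simply apply Theorem~\ref{thm:main-2} once for each $n\ge 2$. By that theorem, $(X,G)$ is thickly $n$-sensitive if and only if $\#Q[x]\ge n$ for all $x\in X$. Quantifying over all $n\ge 2$, condition (1) becomes the statement ``for every $n\ge 2$ and every $x\in X$, $\#Q[x]\ge n$'', which after swapping the quantifiers is exactly ``$Q[x]$ is infinite for every $x\in X$,'' i.e.\ (2). The implication (1)$\Leftrightarrow$(3) follows in the same way from Theorem~\ref{thm:main-2-x}, replacing $\#Q[x]\ge n$ by $\inf_{y\in X_{eq}}\#(\pi^{-1}(y))\ge n$.

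There is no serious obstacle here: the argument is purely a limit-in-$n$ restatement of the two main theorems combined with the identification $Q[x]=\pi^{-1}(\pi(x))$. The only small point worth checking is that ``$\#Q[x]\ge n$ for every $n$'' is the same as ``$Q[x]$ is infinite,'' which holds because cardinality of a set is either a natural number or infinite; a finite set of cardinality $k$ fails the condition at $n=k+1$.
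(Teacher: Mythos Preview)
Your proposal is correct and matches the paper's approach: the corollary is stated there without proof as a direct consequence of Theorems~\ref{thm:main-2} and~\ref{thm:main-2-x}, together with the identification $Q[x]=\pi^{-1}(\pi(x))$ (i.e., $R_\pi=Q(X,G)$) recorded in Section~2. Your write-up simply makes explicit the ``quantify over all $n$'' step that the paper leaves implicit.
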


\begin{cor}
	Let $(X,G)$ be a minimal system and $\pi\colon (X,G)\to (X_{eq},G)$ be the factor map to its maximal equicontinuous factor.
	Assume that  $X\times X$ has a dense set of minimal points.
	If $\pi$ is proximal but not almost  $1$-to-$1$, then $(X,G)$ is thickly $n$-sensitive for all $n\geq 2$.
\end{cor}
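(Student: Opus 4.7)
The plan is to derive the result as an immediate consequence of two previously established corollaries. The key observation is that the hypothesis directly forces every fiber of $\pi$ to be infinite, and infinite fibers everywhere have just been shown to characterize thick $n$-sensitivity for all $n \geq 2$.

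First I would invoke Corollary~\ref{cor:almost-1-1-not-proximal}, which applies since $(X_{eq},G)$ is a minimal factor of the minimal system $(X,G)$ and $\pi$ is assumed proximal but not almost $1$-to-$1$. That corollary gives exactly that $\pi^{-1}(y)$ is an infinite set for every $y \in X_{eq}$.

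Next I would appeal to the preceding corollary, which states that under the standing assumption that $X \times X$ has a dense set of minimal points, $(X,G)$ is thickly $n$-sensitive for every $n \geq 2$ if and only if $\pi^{-1}(y)$ is infinite for every $y \in X_{eq}$. Combining these two steps yields the conclusion with no further work.

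There is no real obstacle here, since the result is purely a concatenation of Corollary~\ref{cor:almost-1-1-not-proximal} with the characterization of ``thickly $n$-sensitive for all $n$'' in terms of infinite fibers. The only thing to double-check is that the hypotheses line up: proximality of $\pi$ and the failure of the almost $1$-to-$1$ property are precisely what Corollary~\ref{cor:almost-1-1-not-proximal} requires, and the density of minimal points in $X \times X$ is the hypothesis carried through from the equicontinuous-factor characterization in Theorem~\ref{thm:main-2-x}.
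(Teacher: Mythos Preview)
Your proposal is correct and matches the paper's intended argument: the paper states this corollary without proof immediately after Corollary~\ref{cor:almost-1-1-not-proximal} and the characterization of thick $n$-sensitivity for all $n$ via infinite fibers, making clear it is meant to follow by concatenating exactly these two results as you do.
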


\begin{cor}
	Let $(X,G)$ be a minimal system and $\pi\colon (X,G)\to (X_{eq},G)$ be the factor map to its maximal equicontinuous factor.
	Assume that  $X\times X$ has a dense set of minimal points.
	Then either $(X,G)$ is thickly $2$-sensitive or $\pi$ is almost $1$-to-$1$.
\end{cor}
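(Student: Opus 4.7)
The plan is to derive the dichotomy directly from Theorem~\ref{thm:main-2-x} applied at $n=2$, together with Proposition~\ref{lem:min-card}. First I would split into cases according to whether $(X,G)$ is thickly $2$-sensitive. In the affirmative case there is nothing to do.

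So I would assume $(X,G)$ is not thickly $2$-sensitive. By Theorem~\ref{thm:main-2-x} (with $n=2$), this failure is equivalent to
\[
\inf_{y\in X_{eq}}\#(\pi^{-1}(y))<2,
\]
which, since cardinalities are positive integers, means there exists $y_0\in X_{eq}$ with $\#(\pi^{-1}(y_0))=1$. In particular the fiber over $y_0$ is a finite set, which is precisely the hypothesis needed to invoke Proposition~\ref{lem:min-card}.

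Applying that proposition to $\pi$, I get that $\pi$ is almost $m$-to-$1$, where $m=\min_{y\in X_{eq}}\#(\pi^{-1}(y))$. But the existence of the singleton fiber over $y_0$ forces $m=1$, so $\pi$ is almost $1$-to-$1$, which closes the dichotomy.

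There is no real obstacle here: the argument is a two-line deduction combining the characterization of thick $2$-sensitivity given by Theorem~\ref{thm:main-2-x} with the minimum-cardinality statement in Proposition~\ref{lem:min-card}. The only thing I would double-check while writing it up is the equivalence, implicit in Theorem~\ref{thm:main-2-x}, that ``not thickly $2$-sensitive'' literally says ``some fiber has cardinality $<2$''; everything else is formal.
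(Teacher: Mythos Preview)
Your argument is correct and mirrors the paper's: assume non-thick-$2$-sensitivity, apply Theorem~\ref{thm:main-2-x} with $n=2$ to find a singleton fiber, then conclude almost $1$-to-$1$ via the minimum-cardinality result (the paper cites Corollary~\ref{cor:min-card} rather than Proposition~\ref{lem:min-card}, but these are equivalent here). The only addition in the paper is a second paragraph showing the ``or'' is exclusive---if $(X,G)$ \emph{is} thickly $2$-sensitive then every fiber has at least two points, so $\pi$ cannot be almost $1$-to-$1$---which is not strictly required by the stated corollary.
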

\begin{proof}
	Assume that $(X,G)$ is not thickly $2$-sensitive, by Theorem~\ref{thm:main-2-x},
	there exists $y_0\in X_{eq}$ such that $\#\pi^{-1}(y_0)<2$,  thus
	$\min_{y\in X_{eq}}\#\pi^{-1}(y)=1$, by Corollary~\ref{cor:min-card},
	$\pi$ is almost $1$-to-$1$.
	
	Now we assume that $(X,G)$ is thickly $2$-sensitive, by Theorem~\ref{thm:main-2-x},
	for any $y\in X_{eq}$, $\pi^{-1}(y)\ge 2$, which implies that $\pi$ is not almost $1$-to-$1$.
\end{proof}

\section{Blockily thick $n$-sensitivity}
The aim of this section is to prove Theorem \ref{thm:main-3-x}.
Let $n\geq 2$. A  topological dynamical system $(X,G)$ is called
\emph{blockily thickly $n$-sensitive}
if there is a constant $\delta>0$, such that for any finite subset $F$ of $G$ and any opene set $U$, there exist $x_1, x_2, \dotsc,x_n\in U$ and $h\in G$
such that
\[\min_{1\le i<j\le n} d(ghx_i, ghx_j)> \delta,\ \forall g\in F.\]

\begin{prop}\label{prop:block-minimal-point}
	Let $(X,G)$ be a minimal system and $\pi\colon (X,G)\to (X_{eq},G)$ be the factor map to its maximal equicontinuous factor. Assume that $n\geq 2$.
	If $(X,G)$ is blockily thickly $n$-sensitive, then	there exists a point $y\in X_{eq}$ and pairwise distinct points $x_1,x_2,\dotsc,x_n\in \pi^{-1}(y)$ such that
	$(x_1,x_2,\dotsc,x_n)$ is a minimal point for $(X^n,G)$.	
\end{prop}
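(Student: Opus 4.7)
The plan is a diagonal compactness argument producing a distal $n$-tuple trapped in a single $\pi$-fiber, followed by passage to a minimal point in its orbit closure inside $X^n$.

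Let $\delta>0$ be a blockily thick $n$-sensitivity constant, fix a point $x_0\in X$, and choose a decreasing sequence of open neighborhoods $U_m$ of $x_0$ with $\diam(U_m)\to 0$. Enumerate $G=\{g_1,g_2,\dotsc\}$ with $g_1=e$ and set $F_m=\{g_1,\dotsc,g_m\}$. Applying blockily thick $n$-sensitivity to the pair $(F_m,U_m)$ supplies points $u_i^{(m)}\in U_m$ and $h_m\in G$ with
\[
\min_{1\le i<j\le n}d\bigl(gh_mu_i^{(m)},\,gh_mu_j^{(m)}\bigr)>\delta\qquad\text{for every }g\in F_m.
\]
Set $v_i^{(m)}:=h_mu_i^{(m)}$ and, after passing to a subsequence using compactness of $X$, assume $v_i^{(m)}\to v_i\in X$ for every $i=1,\dotsc,n$.

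Two properties of $(v_1,\dotsc,v_n)$ then need to be verified. First, for any fixed $g\in G$ we have $g\in F_m$ for all sufficiently large $m$, so the displayed inequality together with continuity of the $G$-action yields $d(gv_i,gv_j)\ge\delta$ for every $g\in G$ and every $i\neq j$; in particular the $v_i$ are pairwise distinct and pairwise distal. Second, choose a compatible $G$-invariant metric $\rho$ on $X_{eq}$; then
\[
\rho\bigl(\pi(v_i^{(m)}),\pi(v_j^{(m)})\bigr)=\rho\bigl(h_m\pi(u_i^{(m)}),h_m\pi(u_j^{(m)})\bigr)=\rho\bigl(\pi(u_i^{(m)}),\pi(u_j^{(m)})\bigr)\longrightarrow 0
\]
as $m\to\infty$, since $u_i^{(m)},u_j^{(m)}\in U_m$ with shrinking diameters and $\pi$ is continuous; passing to the limit gives $\pi(v_1)=\cdots=\pi(v_n)$.

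To conclude, note that the orbit of $(v_1,\dotsc,v_n)$ lies in the closed $G$-invariant set
\[
R_\pi^n\cap\bigl\{(w_1,\dotsc,w_n)\in X^n\colon d(w_i,w_j)\ge\delta\text{ for all }i\neq j\bigr\},
\]
and hence so does its orbit closure. By Zorn's lemma this orbit closure contains a minimal point $(x_1,\dotsc,x_n)$, whose coordinates are then pairwise distinct and share a common image $y\in X_{eq}$ under $\pi$ — exactly the configuration demanded by the proposition. The only delicate point is arranging simultaneously that $(v_1,\dotsc,v_n)$ lies in a single $\pi$-fiber and that its coordinates remain uniformly separated under every element of $G$: the first is secured by the shrinking neighborhoods together with the $G$-invariance of $\rho$, which absorbs the translates $h_m$, and the second is secured by having $\{F_m\}$ exhaust $G$ so that the separation inequality persists in the limit for every group element.
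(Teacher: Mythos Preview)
Your proof is correct and follows essentially the same route as the paper's: a diagonal compactness argument using an exhausting sequence of finite subsets of $G$ and shrinking neighborhoods, together with the $G$-invariant metric on $X_{eq}$ to force the limit tuple into a single fiber. The only difference is that the paper invokes its earlier lemma (pairwise distal tuple on a fiber $\Rightarrow$ minimal tuple on a fiber) for the final step while you inline that argument; one harmless slip of phrasing---the set $R_\pi^n\cap\{(w_i):d(w_i,w_j)\ge\delta\text{ for all }i\neq j\}$ is closed but not $G$-invariant---does not affect the logic, since the orbit closure itself is $G$-invariant and still lies inside that closed set.
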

\begin{proof}
	
	Since $(X_{eq}, G)$ is equicontinuous,
	we can choose a $G$-invariant metric on $\rho$, i.e.,
	for any $(u,v)\in X_{eq}\times X_{eq}$ and any $g\in G$,
	$\rho(gu,gv)=\rho(u,v)$.
	Let $\eps_k>0$ with $\eps_k\to 0$.  Then for each $k\in\bbn$, there is $0<\tau_k<\eps_k$ such that
	if $w_1,w_2\in X$ with $d(w_1,w_2)<2\tau_k$ then $\rho(\pi(w_1),\pi(w_2))<\eps_k$.
	
	Since $G$ is a countable discrete group, there exists an increasing sequence of finite subsets $F_m=\{g_1,g_2,\dotsc,g_m\}$ of $G$ such that
	$\cup_{m=1}^\infty F_m=G$.
	Fix $x\in X$ and let $U_k=B(x,\tau_k)$.
	By the assumption $(X,G)$ is blockily thickly $n$-sensitive with sensitive constant $\delta>0$,
	thus for any $k\in \bbn$, there are $u_{k}^i\in U_k,i=1,2,\dotsc,n$  and $h_{k} \in G$
	such that for any $g\in F_k$, 
	\[
	\min_{1\le i<j\le n}d(gh_ku_{k}^i,gh_ku_{k}^j)\geq \delta.
	\]	
	Without loss of generality we assume that $h_{k}u_{k}^i\to u^i$
	when $k\to \infty$. Since $\cup_{m=1}^\infty F_m=G$,
	$\min_{1\le i<j\le n}d(gu^i,gu^j)\geq \delta$ for any $g\in G$.
	So $(u^i, u^j)$ is a distal pair.
	
	Now we show that $(u^i,u^j)\in R_{\pi}$. Since $u_{k}^i, u_{k}^j\in U_k$,
	we have $\rho(\pi(u_{k}^i),\pi(u_{k}^j))<\eps_k$.
	Then $\rho(g\pi(u_{k}^i),g\pi(u_{k}^j))<\eps_k$ for any $g\in G$. Since $h_{k}u_{k}^i\to u^i$ for $i=1,2,\cdots,n$ $$\rho(\pi(u^i),\pi(u^j))\le \eps_k$$ for each $k\in\bbn$.
	This implies that $\rho(\pi(u^i),\pi(u^j))=0$, i.e., $\pi(u^i)=\pi(u^j)$.
	By Lemma~\ref{lem:factor-minimal-distal}, there exists a point $y\in X_{eq}$ and pairwise distinct points $x_1,x_2,\dotsc,x_n\in \pi^{-1}(y)$ such that
	$(x_1,x_2,\dotsc,x_n)$ is a minimal point for $(X^n,G)$.
\end{proof}

\begin{proof}[Proof of Theorem \ref{thm:main-3-x}]
$(\Rightarrow)$ It follows from Proposition~\ref{prop:block-minimal-point}.

$(\Leftarrow)$
By Lemma~\ref{lem:factor-minimal-distal},
there are pairwise distinct $x_1,\dotsc,x_n\in X$ such that $\pi(x_i)=\pi(x_j)$
and $(x_i,x_j)$ is a distal pair $1\le i<j \le n$. Since $X\times X$ has a dense set of minimal points, $(x_i,x_j)\in Q(X,G)$ and thus $(x_1,\cdots,x_n)\in Q_{n}(X,G)$ by Theorem~\ref{03}.
Let 
\[\delta=\frac{1}{2}\min_{1\le i< j\le n}\inf_{g\in G} d(gx_i,gx_j)>0.\]
For any $m\in\bbn$, there are points
$y_i^m\in B(x_i,1/m)$ and $s_m\in G$ such that $d(s_my_i^m,s_my_j^m)<1/m$ for $1\leq i<j\leq n$.
As $X$ is compact, without loss of generality
assume that $\lim_{m\to\infty}s_my_i^m=x$ for any $1\le i\le n$.
Since $(X,G)$ is minimal, $x$ has a dense orbit.
For any opene subset $U$ of $X$, there is $t\in G$ such that $tx\in U$. 
For any finite set $F\subset G$, 
choose a large enough $m_F$ such that 
\[\min_{1\le i<j\le n}d(gB(x_i,\frac{1}{m_F}),gB(x_j,\frac{1}{m_F}))>\delta,\ \forall g\in F\]
and 
$ts_{m_F}y_i^{m_F}\in U$ for all $1\le i\le n$.
Since $(ts_{m_F})^{-1} (ts_{m_F}y_i^{m_F})=y_i^{m_F}\in B(x_i,1/{m_F}),\ i=1,\dots,n$,
\[\min_{1\le i< j\le n}d(g(ts_{m_F})^{-1} (ts_{m_F}y_i^{m_F}),g(ts_{m_F})^{-1} (ts_{m_F}y_j^{m_F}))>\delta,\ \forall g\in F.\]
It is clear that $(ts_{m_F}y_i)_{1\leq i\leq n}$ are pairwise distinct and we obtain the result.
\end{proof}

We have the following direct consequences of Theorems~\ref{thm:main-3} and \ref{thm:main-3-x}.

\begin{cor}\label{cor:block-n-n+1}
	Let $(X,G)$ be a minimal system and $\pi\colon (X,G)\to (X_{eq},G)$ be the factor to its maximal equicontinuous factor.
	Assume that $n\geq 2$ and $X\times X$ has a dense set minimal points.
	Then $(X,G)$ is blockily thickly $n$-sensitive but not blockily thickly  $(n+1)$-sensitive
	if and only if $r_{\pi}=n$.
\end{cor}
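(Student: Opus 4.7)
The plan is to read this off directly from Theorem~\ref{thm:main-3-x} together with the definition of $r_\pi$. By Theorem~\ref{thm:main-3-x}, blockily thick $k$-sensitivity is equivalent to the existence of a point $y\in X_{eq}$ and pairwise distinct $x_1,\dotsc,x_k\in\pi^{-1}(y)$ such that $(x_1,\dotsc,x_k)$ is a minimal point in $(X^k,G)$, and by the very definition of $r_\pi$ this condition is precisely $r_\pi\ge k$. So blockily thick $n$-sensitivity corresponds to $r_\pi\ge n$, and blockily thick $(n+1)$-sensitivity corresponds to $r_\pi\ge n+1$.

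The only substantive remark is the monotonicity of $r_\pi$ in its parameter, which I would record explicitly. If $r_\pi\ge k$, witnessed by pairwise distinct $x_1,\dotsc,x_k$ on a common fiber with $(x_1,\dotsc,x_k)$ minimal in $(X^k,G)$, then for any $1\le m\le k$ the projection onto the first $m$ coordinates gives a point $(x_1,\dotsc,x_m)$ that is still minimal in $(X^m,G)$ (factor maps send minimal points to minimal points), whose coordinates are still pairwise distinct and still lie in the same fiber of $\pi$. Hence $r_\pi\ge k$ implies $r_\pi\ge m$ for every $m\le k$. Equivalently, the failure of $r_\pi\ge n+1$ forces $r_\pi\le n$.

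Putting the two observations together, the conditions
\begin{center}
``$(X,G)$ is blockily thickly $n$-sensitive but not blockily thickly $(n+1)$-sensitive''
\end{center}
and ``$r_\pi\ge n$ and $r_\pi<n+1$'' coincide, and the latter is just $r_\pi=n$. This proves the corollary.

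There is no real obstacle: the corollary is a bookkeeping consequence of Theorem~\ref{thm:main-3-x}, and the only thing that could go wrong would be if $r_\pi$ were not monotone in its defining parameter, but this is immediate from the stability of minimality under coordinate projections. The hypothesis that $X\times X$ has a dense set of minimal points is used only to the extent that it feeds into Theorem~\ref{thm:main-3-x}; it plays no further role in the reduction.
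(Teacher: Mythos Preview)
Your proof is correct and follows the same route as the paper, which simply states that the corollary follows from the definition of $r_\pi$ and Theorem~\ref{thm:main-3-x}. Your explicit verification of the monotonicity of the defining condition for $r_\pi$ via coordinate projection is a welcome clarification, but it does not depart from the paper's argument in substance.
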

\begin{proof}
	It follows from the definition of $r_{\pi}$ and Theorem~\ref{thm:main-3-x}.
\end{proof}

\begin{cor}
	Let $(X,G)$ be a minimal system.
	Assume that $n\geq 2$ and $X\times X$ has a dense set of minimal points.
	If $(X,G)$ is thickly $n$-sensitive but not thickly  $(n+1)$-sensitive, then $(X,G)$ is also blockily thickly $n$-sensitive but not blockily thickly  $(n+1)$-sensitive.
\end{cor}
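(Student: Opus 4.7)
The plan is to chain together the two corollaries~\ref{cor:thick-n-n+1} and~\ref{cor:block-n-n+1} that have already been established, using Proposition~\ref{prop:1:minimial-r-pi} as the bridge between the quantity $\min_{y\in X_{eq}}\#(\pi^{-1}(y))$ and the invariant $r_{\pi}$. Here $\pi\colon (X,G)\to (X_{eq},G)$ denotes the factor map to the maximal equicontinuous factor.

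First I would apply Corollary~\ref{cor:thick-n-n+1} to rewrite the hypothesis ``$(X,G)$ is thickly $n$-sensitive but not thickly $(n+1)$-sensitive'' as the equivalent fiber condition $\min_{y\in X_{eq}}\#(\pi^{-1}(y))=n$. In particular, since $n$ is finite, there is some $y_{0}\in X_{eq}$ with $\pi^{-1}(y_{0})$ a finite set of cardinality $n$, so the hypothesis of Proposition~\ref{prop:1:minimial-r-pi} is met.

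Next I would invoke Proposition~\ref{prop:1:minimial-r-pi} for the factor map $\pi$ to conclude
\[
r_{\pi}=\min_{y\in X_{eq}}\#(\pi^{-1}(y))=n.
\]
Finally, I would feed $r_{\pi}=n$ into Corollary~\ref{cor:block-n-n+1}, which immediately yields the desired conclusion that $(X,G)$ is blockily thickly $n$-sensitive but not blockily thickly $(n+1)$-sensitive.

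There is essentially no technical obstacle: the entire argument is a short string of equivalences already proved in the paper. The only conceptual content is the observation that the finiteness of $\min_{y\in X_{eq}}\#(\pi^{-1}(y))$ (which is automatic from the hypothesis) is exactly what makes Proposition~\ref{prop:1:minimial-r-pi} applicable, so the minimum fiber size and the invariant $r_{\pi}$ coincide here and both equal $n$.
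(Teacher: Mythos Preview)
Your proposal is correct and follows essentially the same route as the paper's own proof: apply Corollary~\ref{cor:thick-n-n+1} to obtain $\min_{y\in X_{eq}}\#(\pi^{-1}(y))=n$, then Proposition~\ref{prop:1:minimial-r-pi} to get $r_{\pi}=n$, and finally Corollary~\ref{cor:block-n-n+1}. Your explicit remark that the finiteness of some fiber is what legitimizes the use of Proposition~\ref{prop:1:minimial-r-pi} is a nice clarification the paper leaves implicit.
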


\begin{proof}
	By Corollary~\ref{cor:thick-n-n+1},
	one has $\min_{y\in X_{eq}}\#\pi^{-1}(y)=n$. Now by Proposition~\ref{prop:1:minimial-r-pi}, $r_{\pi}=n$. Finally by Corollary~\ref{cor:block-n-n+1},
	$(X,G)$ is blockily thickly $n$-sensitive but not blockily thickly  $(n+1)$-sensitive.
\end{proof}

\begin{cor}\label{cor:block-thick-sensitive-dichotomy}
	Let $(X,G)$ be a minimal system and $\pi\colon (X,G)\to (X_{eq},G)$ be the factor to its maximal equicontinuous factor.
	Assume that  $X\times X$ has a dense set minimal points.
	Then either $(X,G)$ is blockily thickly $2$-sensitive or
	$\pi$ is proximal.	
\end{cor}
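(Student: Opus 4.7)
The plan is to obtain the dichotomy as a direct contrapositive of Theorem~\ref{thm:main-3-x} in the case $n=2$, combined with the characterization of proximal factor maps given in Corollary~\ref{cor:proximal-r-pi}.

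First, suppose $(X,G)$ is not blockily thickly $2$-sensitive. Applying the backward direction of Theorem~\ref{thm:main-3-x} contrapositively, there is no point $y\in X_{eq}$ together with two distinct points $x_1,x_2\in\pi^{-1}(y)$ such that $(x_1,x_2)$ is a minimal point for $(X^2,G)$. Reading this in the language of the quantity $r_\pi$ introduced after Corollary~\ref{lem:Q-minimal-distal}, which is well-defined as a constant by Lemma~\ref{lem:factor-minimal-distal}, this precisely says $r_\pi<2$, and since $r_\pi\geq 1$ always (any diagonal orbit-closure contains a minimal point), we conclude $r_\pi=1$.

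Second, I would invoke Corollary~\ref{cor:proximal-r-pi}, which states $\pi$ is proximal if and only if $r_\pi=1$. This immediately gives that $\pi$ is proximal, completing the dichotomy.

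There is essentially no technical obstacle: the entire argument consists of recognizing that the two characterizations — Theorem~\ref{thm:main-3-x} and Corollary~\ref{cor:proximal-r-pi} — fit together perfectly when $n=2$, since the negation of the existence of a minimal pair with distinct entries on a single fiber is exactly the statement that $r_\pi=1$. The only point requiring the hypothesis that $X\times X$ has a dense set of minimal points is the application of Theorem~\ref{thm:main-3-x} itself; Corollary~\ref{cor:proximal-r-pi} has no such hypothesis, and the argument does not need the assumption elsewhere.
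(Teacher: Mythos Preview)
Your proposal is correct and follows essentially the same approach as the paper: the paper's proof is a single line stating that the result follows from Corollary~\ref{cor:proximal-r-pi} and Theorem~\ref{thm:main-3-x}, which is exactly the combination you spell out. Your added remark that $r_\pi\geq 1$ always holds and your observation about where the hypothesis on minimal points in $X\times X$ is used are accurate elaborations of what the paper leaves implicit.
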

\begin{proof}
	It follows from  Corollary~\ref{cor:proximal-r-pi} and Theorem~\ref{thm:main-3-x}. 
\end{proof}

\begin{rem}\label{rek:proximal-not-1-1}
	Let $(X,G)$ be a minimal system and $\pi\colon (X,G)\to (X_{eq},G)$ be the factor to its maximal equicontinuous factor.
	Assume that $n\geq 2$ and $X\times X$ has a dense set minimal points.	
	If $\pi$ is proximal but not almost $1$-to-$1$, 
	then $(X,G)$ is not blockily thickly $2$-sensitive, 
	but it is thickly $n$-sensitive for all $n\geq 2$.
\end{rem} 

\section{Final remarks}

\begin{rem}
In this paper we assume that
$G$ is a countable discrete group for simplicity.
In fact, Theorem~\ref{thm:main-1} holds for general Hausdorff topological group actions without changing of the proof. For Theorems \ref{thm:main-2-x} and \ref{thm:main-3-x}, we need the acting group $G$ to be $\sigma$-compact and modify the proofs slightly. 
Recall that a Hausdorff topological group $G$ is \emph{$\sigma$-compact} if there exists a sequence $\{F_i\}$  of compact subsets of $G$ such that $\cup_{i=1}^\infty F_i=G$. 
And in this case we say that a subset $A$ of $G$ is \emph{thick}  if for any compact set $F$ of $G$ there exists $g\in G$ such that $Fg\subset A$;
and \emph{syndetic}  if there exists a compact set $K$ of $G$ such that $G=KA$.
\end{rem}

\begin{rem}
In the statement of Theorem~\ref{03}, we state a condition that $X\times X$ has a dense set of minimal points.
In fact the origin condition in \cite[Theorem 8]{A04} is a group theoretic condition on Ellis group of the system, which is weaker than the condition stated in Theorem~\ref{03} by \cite[Remark 1.13 (2)]{AEE95}.

Following \cite{A01}, a minimal system $(X,G)$  is said to satisfy the \emph{local Bronstein condition} if whenever $(x,y)$ is a minimal point in $Q(X,G)$, there is a sequence $\{(x_n,y_n)\}$ of minimal points in $X\times X$ and a sequence $\{g_n\}$ in $G$ such that
$(x_n,y_n)\to (x,y)$ and $d(g_nx_n,g_ny_n)\to 0$ as $n\to\infty$.
It is clear that if $X\times X$ has a dense set of minimal points
then $(X,G)$ satisfies the local Bronstein condition.
According \cite[Theorem 2]{A01}, Theorem~\ref{03} also holds by replacing the condition  $X\times X$ having a dense set of minimal points by $(X,G)$ satisfying the local Bronstein condition.

It is shown in \cite{M78} that if a minimal system $(X,G)$ admits an invariant probability Borel measure then the regional proximal relation is an equivalence relation.
It is interesting to know whether one can replace the conidtion $X\times X$ having a dense set of minimal points in Theorem~\ref{03} by $(X,G)$ admiting an invariant probability Borel measure.
\end{rem}

\begin{rem}
It is natural to consider equicontinuity and sensitivity in the relative cases.
Let $\pi\colon (X,G)\to (Y,G)$ be a factor map.
Recall that we say that $\pi$ is \emph{relatively equicontinuous}
if for every $\eps>0$ there exists $\delta>0$ such that for any $x,y\in X$ with $d(x,y)<\delta$ and $\pi(x)=\pi(y)$,
$d(gx,gy)<\eps$ for all $g\in G$;
and \emph{relatively sensitive} if 
there exists a constant $\delta>0$ such that for any opene subset $U$ of $X$, there exist $x,y\in U$ and $g\in G$ such that 
$\pi(x)=\pi(y)$ and $d(gx,gy)>\delta$. 
In \cite{YZ20}, Yu and Zhou proved a relative version of Corollary \ref{cor:block-thick-sensitive-dichotomy} and a weak version of one side result of Theorem ~\ref{thm:main-1} for $\mathbb{Z}$-actions.
Recently, in \cite{D20} Dai studied the structure theorems of minimal semigroup 
actions and generalized Yu-Zhou's result to minimal semigroup 
actions.
It is interesting how to obtain relative versions of our main results Theorems \ref{thm:main-1}, \ref{thm:main-2} and~\ref{thm:main-3} and the key point is the relative version of Theorem~\ref{03}.
\end{rem}

\subsection*{Acknowledgments.}
The authors were supported by  NNSF of China
(11771264, 11871188) and NSF of Guangdong Province (2018B030306024). The authors would like to thank Professors Xiongping Dai and Tao Yu for helpful suggestions.
The authors  would also like to thank the anonymous referee for the careful reading and
helpful suggestions.

\end{document}